\newtheorem{theorem}[equation]{Theorem}
\newtheorem{prop}[equation]{Proposition}
\newtheorem{lemma}[equation]{Lemma}
\newtheorem{cor}[equation]{Corollary}
\newtheorem{corollary}[equation]{Corollary}
\theoremstyle{remark}
\newtheorem{remark}[equation]{Remark}
\theoremstyle{definition}
\numberwithin{equation}{subsection}
\newtheorem{example}[equation]{Example}
\newcommand{\R}{\mathbb{R}}
\newcommand{\di}{\operatorname{div}}
\newcommand{\grad}{\operatorname{grad}}
\newcommand{\HH}{\mathcal{H}}
\newcommand{\Z}{\mathcal{Z}}
\newcommand{\V}{\mathcal V}
\newcommand{\Ph}{\mathcal P}
\newcommand{\LL}{\mathcal L}
\newcommand{\Vol}{\operatorname{Vol}}
\newcommand{\supp}{\operatorname{supp}}
\newcommand{\curl}{\operatorname{curl}}
\title[Inverse electrostatic and elasticity problems]{On a class of inverse electrostatic and elasticity problems}
\author{Andrei Artemev}
\address{Department of Mechanical and Aerospace Engineering, Carleton University, 1125 Colonel By Drive, Ottawa, ON, Canada, K1S 5B6}
\email{aartemev@connect.carleton.ca}
\author{Leonid Parnovski}
\address{Department of Mathematics, University College London, 
Gower Street, London WC1E 6BT, UK}
\email{leonid@math.ucl.ac.uk}
\author{Iosif Polterovich}
\address{D\'epartement de math\'ematiques et de
statistique, Universit\'e de Montr\'eal, C. P. 6128, Succ.
Centre-ville, Montr\'eal, Qu\'ebec,  H3C 3J7,  Canada}
\email{iossif@dms.umontreal.ca} \subjclass[2010]{31A25, 31B20,
74B10} \keywords{Poisson equation, Navier equation,  electrostatics,
linear elasticity, inverse problem, harmonic function}
\begin{document}
\begin{abstract}
We study the  inverse electrostatic and elasticity problems associated with Poisson and
Navier equations.  The uniqueness of solutions  of these problems is proved for piecewise constant electric charge and internal stress distributions
having a checkered structure: they are constant  on rectangular blocks. Such distributions appear naturally in practical applications.  We also 
discuss computational challenges arising in the numerical implementation of our method. 
\end{abstract}
\maketitle
\section{Introduction and main results}
\subsection{Direct electrostatic and elasticity problems}
\label{1.1}
The {\it Poisson} and {\it Navier} (also known as {\it Lam\'e}\,) partial differential equations of elliptic type are commonly used for direct problems in electrostatics and elasticity theory. Let $\Omega$ be a bounded domain in $\mathbb{R}^n$, $n=2,3,\dots$; the cases $n=2,3$ are the most physically interesting.
It is assumed throughout the paper that   $\Omega$ has a   piecewise smooth boundary $\Gamma=\partial \Omega$.

In the direct formulation of the electrostatic problem, the Poisson equation   for a real valued function $u(x)$,  called
the {\it electric potential distribution},
\begin{equation}
\label{Poisson}
\Delta u = f(x),
\end{equation}
is solved in  the domain $\Omega$
  with a known distribution of the {\it electric charge density}, $-f(x)$, and with definite boundary conditions set on $\Gamma$. The boundary conditions may be formulated either in the form of potential values ({\it Dirichlet conditions})
\begin{equation}
\label{dir:elec}
u|_{\Gamma}=\phi_1
\end{equation}
or in terms of the electric field ({\it Neumann conditions}),
\begin{equation}
\label{neum:elec}
(\nabla u, \nu)|_{\Gamma} =\phi_2.
\end{equation}
Here $\nu=(\nu_1,\dots,\nu_n)$ is the unit outer normal vector to $\Gamma$ and $(\cdot,\cdot)$ is the Euclidean scalar product in ${\mathbb R}^n$.
Different parts of   $\Gamma$ may have different types of boundary conditions, and at any part of $\Gamma$  only one boundary condition may be set (which may be a linear combination of Dirichlet and Neumann conditions), so that the
problem is not overconstrained.

\smallskip

The direct formulation of the elasticity problem is described by the Navier equation
\begin{equation}
\label{Navier}
\Delta U +\alpha\grad \di U =F(x),
\end{equation}
for a vector-valued function $U:\Omega \to \mathbb{R}^n$, called
the {\it displacement field}.
Here $F(x)= -\frac{2(1+\mu)}{E}\mathcal{F}(x)$, where $\mathcal{F}$ is  the distribution of {\it body forces}, $\mu$ is the  {\it Poisson's ratio}, $E$ is the {\it Young's modulus} and parameter $\alpha$ is related to the Poisson's ratio by  formula
$\alpha=\frac{1}{1-2\mu}.$   The body $\Omega$ is assumed to be elastically isotropic.
The equation \eqref{Navier} is solved for the known distribution of body forces in  $\Omega$  and the boundary conditions at  $\Gamma$  defined for displacements  ({\it Dirichlet conditions})
\begin{equation}
\label{dir:elast}
U|_\Gamma=\Phi_1
\end{equation}
or for  traction forces ({\it Neumann conditions}),
\begin{equation}
\label{neum:elast}
(\sigma, \nu)=\Phi_2.
\end{equation}
Here $\sigma$ is a $(0,2)$--tensor (called the {\it stress tensor}), whose components are  related to the components of the displacement gradient through Hooke's law:
$$\sigma_{ij}(U)=(\alpha-1)\, \delta_{ij}\, \di U+
\partial U_i/\partial x_j + \partial U_j/\partial x_i,\,\,\,\, $$
$i,j=1,\dots,n$,  where $\delta_{ij}$ is the Kronecker symbol. Note
that the Hooke's law is given above in dimensionless form
corresponding to the unit value of the shear modulus. The scalar product
$(\sigma, \nu)$ is  a vector in ${\mathbb R}^n$ with the components
$$\sum_{j=1}^n \sigma_{ij}(U)\,\nu_j, \,\,\,i=1,\dots, n.$$ At any
part of $\Gamma$ the boundary condition can be specified for
the displacement, or for the traction force, or for a linear combination
between displacements and traction forces. As in the  direct
electrostatic problem,  only one boundary condition can be
assigned at any part of $\Gamma$. The attempt to define
simultaneously two different types of boundary conditions at the
same part of  $\Gamma$ (i.e. to impose the {\it Cauchy conditions}
\cite[chapter 6]{MF} corresponding to the overconstraining of the
system) may lead to the loss of the solution.

The properties of the direct electrostatic  and elasticity problems have been studied intensively for almost two centuries. It is well-known that  problems \eqref{Poisson} and \eqref{Navier} have unique solutions
under the Dirichlet boundary conditions \eqref{dir:elec} and \eqref{dir:elast}, respectively. For Neumann boundary conditions,  solutions exist under
additional  assumptions $\int_\Omega f dx =\int_{\Gamma} \phi_2 ds$ and $\int_\Omega F dx = \int_{\Gamma} \Phi_2 ds$,
respectively, and are unique up to additive constants  (see \cite{Ja, TG}).

There are various analytical and numerical methods to find solutions of the boundary value problems for Poisson and Navier equations. Most of the numerical methods developed for these problems are based on finite difference approximations  \cite{Hi, MG, Sa, St},  finite element analysis \cite{Ba, Sa, CS}
and Fourier transform  \cite{Du, Kh}.The finite element method has become a dominant approach to solving the elasticity problems,  with the exception of the microelasticity analysis for strain interactions in microstructures, where the Fourier transform is still used intensively. All major numerical techniques are still used for the electrostatic (or magnetostatic) and electromagnetic  problems.


%

\subsection{Inverse problems for Poisson and Navier equations}
\label{sec:unique}
In the present paper we consider the following inverse electrostatic and elasticity problems:  the  charge distributions or  the internal body forces are not known, and the states of the system, i.e., the functions $f$ or  $\mathcal{F}$, should be determined from the boundary conditions \eqref{dir:elec}--\eqref{neum:elec} or, respectively,  \eqref{dir:elast}--\eqref{neum:elast}.   Note that it is assumed that both Dirichlet and Neumann data are obtained from the boundary measurements. Such problems have attracted much interest in the recent years among physicists and engineers (see section \ref{sec:disc} and references therein).

The problems described above can be viewed as examples of inverse problems of potential theory \cite{Isak3}. These problems are  different from the 
Calder\'on's inverse conductivity and elasticity problems, for which the coefficients of the left--hand sides of the equations, rather than the right--hand sides, are unknown and have to be determined from the boundary data (see, for example,  \cite{Ca, Uh1, Uh2,  AMR,  Isak2}).

The inverse electrostatic problem formulated above is  closely related to  the  inverse gravimetry problem\footnote{We thank Leonid Polterovich for bringing this link  to our attention.} that has important applications to geophysics and has been intensively studied for many years (see, for instance,  \cite{Isak1, Isak2,  FM} and references therein). 

In the present subsection we collect some general results on uniqueness of solutions of inverse problems for Poisson and Navier equations.  Essentially, they are well-known (see, for example, \cite{BSB}). We present their proofs  in subsection \ref{sec:harm} for the sake of completeness.



Let, as before, $\Omega \subset \mathbb{R}^n$ be a Euclidean domain with piecewise smooth boundary $\Gamma$.
Consider the following overdetermined boundary value problem for the Poisson equation:
\begin{equation}
\label{problem}
 \Delta u = f, \quad x \in \Omega,
\end{equation}
$$u|_{\Gamma} =0, \quad \left.(\nabla u, \nu)\right|_{\Gamma}=0.
$$
Let $H(\Omega)$ be the space of {\it harmonic functions} on $\Omega$.
Denote by $Z(\Omega)$ its orthogonal complement in $L^2(\Omega)$.
We have the following
\begin{theorem}
\label{harm} A nonzero solution of problem \eqref{problem} exists
if and only if  $f \in Z(\Omega)$.
\end{theorem}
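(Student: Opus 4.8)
The plan is to prove the two implications separately, with Green's second identity serving as the common engine. For a harmonic function $h$ and a sufficiently regular $u$, this identity reads
\begin{equation*}
\int_\Omega \big(h\,\Delta u - u\,\Delta h\big)\,dx
= \int_\Gamma \Big(h\,(\nabla u,\nu) - u\,(\nabla h,\nu)\Big)\,ds,
\end{equation*}
and since $\Delta h=0$ it collapses to a relation between the bulk pairing $\int_\Omega h\,\Delta u\,dx$ and the two boundary terms carrying the Cauchy data of $u$. This is exactly the mechanism linking the orthogonality condition $f\in Z(\Omega)$ to the vanishing of both boundary conditions in \eqref{problem}.

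For the forward direction, I would start from a nonzero solution $u$ of \eqref{problem} and take an arbitrary $h\in H(\Omega)$. Substituting into the identity and invoking both boundary conditions $u|_\Gamma=0$ and $(\nabla u,\nu)|_\Gamma=0$, the entire right--hand side vanishes, leaving $\int_\Omega h\,f\,dx=\int_\Omega h\,\Delta u\,dx=0$. As $h$ ranges over all of $H(\Omega)$, this says precisely that $f$ is orthogonal to the harmonic functions, i.e. $f\in Z(\Omega)$.

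For the converse, given $f\in Z(\Omega)$ I would first solve the Dirichlet problem $\Delta u=f$, $u|_\Gamma=0$, whose unique solution exists by the standard theory recalled in \S\ref{1.1}; this $u$ is nonzero as soon as $f\neq 0$. It then remains to verify that the Neumann condition holds automatically. Applying Green's identity to this $u$ and an arbitrary $h\in H(\Omega)$, the term $u\,(\nabla h,\nu)$ drops out because $u|_\Gamma=0$, while the left--hand side equals $\int_\Omega h\,f\,dx=0$ by the hypothesis $f\in Z(\Omega)$. We are thus left with $\int_\Gamma h\,(\nabla u,\nu)\,ds=0$ for every harmonic $h$.

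The main obstacle is to upgrade this family of integral identities to the pointwise conclusion $(\nabla u,\nu)|_\Gamma=0$. The key observation is that the boundary traces $h|_\Gamma$ of harmonic functions form a dense class: given any prescribed boundary datum $\phi$, solving the Dirichlet problem $\Delta h=0$, $h|_\Gamma=\phi$ produces a harmonic $h$ with that trace. Hence $\int_\Gamma \phi\,(\nabla u,\nu)\,ds=0$ for a dense set of $\phi$, which forces $(\nabla u,\nu)=0$ on $\Gamma$. The care needed here is functional-analytic and regularity-theoretic: one must ensure that the normal-derivative trace of $u$ is well defined on the piecewise smooth boundary $\Gamma$, that Green's identity is legitimate in this setting, and that the admissible traces $\phi$ are genuinely dense in the space pairing against $(\nabla u,\nu)$. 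This is the step where the argument must be made rigorous.
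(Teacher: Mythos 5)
Your necessity argument is exactly the paper's: Green's second identity with an arbitrary $h\in H(\Omega)$, both boundary terms killed by the homogeneous Cauchy data, giving $\int_\Omega fh=0$. Your sufficiency argument, however, takes a genuinely different route. The paper constructs the solution explicitly as $u(y)=\int_\Omega f(x)G_1(x,y)\,dx$ with $G_1$ the Dirichlet Green's function, and then uses the observation that $G_1-G_2$ differs from a harmonic function of $x$ by the explicit term $x_1^2/(2|\Omega|)$ (with $G_2$ the Neumann Green's function); orthogonality of $f$ to harmonic functions then identifies $u$, up to an irrelevant constant, with the Neumann representation $\int_\Omega f(x)G_2(x,y)\,dx$, which carries the vanishing normal derivative for free. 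You instead solve only the Dirichlet problem and recover the Neumann condition by duality: Green's identity yields $\int_\Gamma h\,(\nabla u,\nu)\,ds=0$ for all harmonic $h$, and since every reasonable boundary datum $\phi$ is the trace of its harmonic extension, the normal derivative is annihilated by a dense class of test functions and hence vanishes. Your version avoids the Neumann Green's function and the slightly delicate harmonicity bookkeeping altogether, at the price of having to make the trace/duality step precise (the normal derivative of $u$ lives a priori only in $H^{-1/2}(\Gamma)$ on a piecewise smooth boundary, and one must check that the attained traces $\phi$ are dense in $H^{1/2}(\Gamma)$ — which they are, since the harmonic extension realizes every element of that space). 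The paper's version is more constructive and sidesteps the density question, but needs the existence and standard properties of both Green's functions. Both arguments are correct at the same level of rigor; the regularity issues you flag at the end are real but are equally present, and equally glossed over, in the paper's own proof.
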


Let $V \subset L^2(\Omega)$ be a linear subspace. We say that the inverse electrostatics problem  possesses a {\it uniqueness property} for charge distributions in $V$ if
for any  two solutions  $u$ and $w$ of the Poisson equations $\Delta u =f$ and $\Delta w= g$ in $\Omega$  with $f, g \in V$, the equalities
$u|_{\Gamma}=w|_{\Gamma}$ and $ (\nabla u,\nu)|_{\Gamma} =(\nabla w, \nu)|_{\Gamma}$ imply $f \equiv g$. Since $V$ is a linear subspace of $L^2(\Omega)$ and the Poisson equation is also linear, this is equivalent to saying that for any nonzero $f \in V$,  problem \eqref{problem} does not have
a solution.  Therefore, Theorem \ref{harm} implies the following
\begin{cor}
\label{cor1}
The inverse electrostatics problem possesses a uniqueness property for charge distributions in a linear subspace $V(\Omega) \subset L^2(\Omega)$ if and only if $V(\Omega) \cap Z(\Omega) =0.$
\end{cor}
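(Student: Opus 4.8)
The plan is to obtain the corollary directly from Theorem \ref{harm}, exploiting the reformulation of the uniqueness property already recorded in the text: by linearity of the Poisson equation, the inverse electrostatics problem has the uniqueness property on $V(\Omega)$ precisely when, for every nonzero $f \in V(\Omega)$, the overdetermined problem \eqref{problem} admits no solution. Once this equivalence is pinned down, the statement follows by substituting for the phrase ``admits no solution'' its characterization from Theorem \ref{harm}.

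First I would make the linearity reduction explicit. Given two solutions $u,w$ of $\Delta u = f$ and $\Delta w = g$ with $f,g \in V(\Omega)$ that agree in both Cauchy data on $\Gamma$, the difference $v := u-w$ solves problem \eqref{problem} with right-hand side $h := f-g$, and $h \in V(\Omega)$ because $V(\Omega)$ is a linear subspace; conversely every solution of \eqref{problem} with right-hand side $h \in V(\Omega)$ arises this way. Hence failure of the uniqueness property is equivalent to the existence of a nonzero $h \in V(\Omega)$ for which \eqref{problem} has a solution. Second, I would observe that for such $h$ the solution is automatically nontrivial, so that Theorem \ref{harm} applies as stated: if $h \neq 0$ then $\Delta v = h$ forces $v \neq 0$, whence ``problem \eqref{problem} has a solution'' coincides with ``problem \eqref{problem} has a nonzero solution,'' which by Theorem \ref{harm} is equivalent to $h \in Z(\Omega)$. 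Combining these, uniqueness fails if and only if there is a nonzero $h \in V(\Omega) \cap Z(\Omega)$, that is, if and only if $V(\Omega) \cap Z(\Omega) \neq \{0\}$; contrapositively, uniqueness holds if and only if $V(\Omega) \cap Z(\Omega) = 0$.

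The corollary is an essentially immediate consequence of Theorem \ref{harm}, so there is no substantial analytic difficulty here. The only point demanding care is the bookkeeping around the word \emph{nonzero}: I must check both that a solution of \eqref{problem} with nonvanishing right-hand side cannot itself be trivial (so Theorem \ref{harm} is invoked correctly) and that the case $h=0$ contributes nothing, the homogeneous overdetermined problem having only the trivial solution by uniqueness for the Dirichlet problem. With these remarks settled, both implications follow without further computation.
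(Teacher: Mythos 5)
Your argument is correct and follows exactly the route the paper takes: the linearity reduction to the overdetermined problem \eqref{problem} is precisely the remark made in the text just before the corollary, after which Theorem \ref{harm} is applied. Your extra care with the word ``nonzero'' (a solution with nonvanishing right-hand side cannot be trivial, and the homogeneous case contributes nothing) is a worthwhile clarification of a point the paper passes over silently, but it does not change the argument.
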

\begin{remark} It follows immediately from Corollary \ref{cor1} that the inverse electrostatics problem possesses a uniqueness property if $V(\Omega)
\subset H(\Omega)$. For instance, this is true if $V(\Omega)$ is the space of {\it linear functions} on~$\Omega$.
\end{remark}
Similar results hold for the inverse elasticity problem. Consider an
overdetermined problem for the Navier equation:
\begin{equation}
\label{problem2}
\Delta U + \alpha \, \grad \di U = F, \quad x \in \Omega,
\end{equation}
$$U|_{\Gamma}=0,  \quad  \left. (\sigma, \nu)
\right|_{\Gamma}=0.
$$
Let
$$\mathcal{L}=\Delta  + \alpha \, \grad \di $$
be the Navier operator acting on vector-valued functions $U:\Omega \to
{\mathbb R}^n$.
Denote by $\HH(\Omega)$ the kernel of $\mathcal{L}$ (i.e., the
analogue of harmonic functions for the Navier operator) and by $\Z(\Omega)$ its orthogonal
complement in $L^2(\Omega, {\mathbb R}^n)$.
\begin{theorem}
\label{harm2}  A nonzero solution of  problem
\eqref{problem2} exists if and only if $f \in \Z(\Omega)$.
\end{theorem}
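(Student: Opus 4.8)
The plan is to mirror the proof of Theorem~\ref{harm}, with Green's second identity for the Laplacian replaced by the reciprocity (Betti) identity for the Navier operator $\mathcal{L}$. The crucial first step is to record this identity. Writing the strain $\varepsilon_{ij}(U)=\tfrac12(\partial_i U_j+\partial_j U_i)$, one reads off from the definition of $\sigma$ that $(\mathcal{L}U)_i=\sum_j\partial_j\sigma_{ij}(U)$, i.e. $\mathcal{L}U=\di\,\sigma(U)$ row by row. Integrating by parts then gives, for smooth vector fields $U,W$,
\begin{equation*}
\int_\Omega (\mathcal{L}U)\cdot W\,dx=\int_\Gamma (\sigma(U),\nu)\cdot W\,ds-\int_\Omega a(U,W)\,dx,
\end{equation*}
where $a(U,W)=\sum_{i,j}\sigma_{ij}(U)\,\partial_j W_i$. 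Since $\sigma_{ij}=\sigma_{ji}$, this form can be rewritten as $a(U,W)=(\alpha-1)\,\di U\,\di W+2\sum_{i,j}\varepsilon_{ij}(U)\,\varepsilon_{ij}(W)$, which is manifestly symmetric in $U$ and $W$. Subtracting the same identity with the roles of $U$ and $W$ exchanged yields the \emph{Betti reciprocity formula}
\begin{equation*}
\int_\Omega\big((\mathcal{L}U)\cdot W-(\mathcal{L}W)\cdot U\big)\,dx=\int_\Gamma\big((\sigma(U),\nu)\cdot W-(\sigma(W),\nu)\cdot U\big)\,ds,
\end{equation*}
the exact analogue of Green's second identity and the workhorse of the argument.

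For the ``only if'' direction I would take a nonzero solution $U$ of \eqref{problem2} and an arbitrary $W\in\HH(\Omega)$, so that $\mathcal{L}W=0$. Feeding $U$ and $W$ into Betti's formula and using $U|_\Gamma=0$ together with $(\sigma(U),\nu)|_\Gamma=0$ annihilates both boundary integrals, while $\mathcal{L}W=0$ removes the second bulk term, leaving $\int_\Omega F\cdot W\,dx=0$. As $W\in\HH(\Omega)$ is arbitrary, this says exactly $F\perp\HH(\Omega)$, i.e. $F\in\Z(\Omega)$.

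For the converse I would start from $F\in\Z(\Omega)$ and let $U$ be the unique solution of the Dirichlet problem $\mathcal{L}U=F$, $U|_\Gamma=0$, whose well-posedness was recalled in the introduction. Applying Betti's formula to this $U$ and an arbitrary $W\in\HH(\Omega)$, the vanishing of $\mathcal{L}W$ and of $U|_\Gamma$ reduces it to $\int_\Omega F\cdot W\,dx=\int_\Gamma(\sigma(U),\nu)\cdot W\,ds$; since $F\in\Z(\Omega)$ the left-hand side is zero, so $\int_\Gamma(\sigma(U),\nu)\cdot W\,ds=0$ for every $W\in\HH(\Omega)$. To pass from this to $(\sigma(U),\nu)|_\Gamma=0$ I would invoke the density in $L^2(\Gamma,\mathbb{R}^n)$ of the traces $W|_\Gamma$, $W\in\HH(\Omega)$: any sufficiently regular boundary datum is realized as such a trace by solving $\mathcal{L}W=0$ with that datum. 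Hence $U$ solves \eqref{problem2}, and it is nonzero precisely when $F\neq0$, since $F=\mathcal{L}U$ and since $\mathcal{L}U=0$ with $U|_\Gamma=0$ forces $U=0$ by Dirichlet uniqueness.

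The main obstacle is the opening step: one must check that the boundary operator naturally paired with the Dirichlet trace under integration by parts is precisely the traction $(\sigma,\nu)$, and that the induced bulk form $a(U,W)$ is symmetric, so that $\mathcal{L}$ is formally self-adjoint. Once these algebraic identities are in place the scalar argument transfers essentially verbatim, the only remaining analytic ingredient being the density of the boundary traces of $\HH(\Omega)$, which itself rests on the solvability of the Dirichlet problem for $\mathcal{L}$.
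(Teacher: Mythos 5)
Your proof is correct, and the necessity half coincides with the paper's argument (the paper's proof of Theorem~\ref{harm2} is declared ``completely analogous'' to that of Theorem~\ref{harm}, whose necessity step is exactly your Betti-formula computation: integrate by parts twice against $W\in\HH(\Omega)$ and observe that the vanishing Cauchy data kill the boundary terms). Where you genuinely diverge is in the sufficiency half. The paper constructs the solution explicitly: it takes the Green's functions $G_1$, $G_2$ of the Dirichlet and Neumann problems for $\mathcal{L}$ (citing Solomon for their existence), notes that their difference, after a suitable correction, lies in $\HH(\Omega)$ as a function of $x$, and concludes that for $F\perp\HH(\Omega)$ the Dirichlet and Neumann potentials of $F$ agree up to an inessential term, so the single function $U(y)=\int_\Omega F(x)G_1(x,y)\,dx$ satisfies both homogeneous boundary conditions at once. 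You instead solve only the Dirichlet problem $\mathcal{L}U=F$, $U|_\Gamma=0$, and then use Betti's formula together with the orthogonality $F\perp\HH(\Omega)$ to show that the residual traction $(\sigma(U),\nu)$ integrates to zero against every trace $W|_\Gamma$ with $W\in\HH(\Omega)$; the density of such traces (obtained by solving Dirichlet problems for $\mathcal{L}$ with prescribed boundary data) then forces $(\sigma(U),\nu)|_\Gamma=0$. Your route buys a cleaner argument that sidesteps the Neumann Green's function entirely -- in particular it avoids the compatibility/normalization issues (rigid motions in the kernel of the Neumann problem) that make the elastic $G_2$ slightly delicate -- at the price of invoking the density of boundary traces of $\HH(\Omega)$, which in turn rests on the same Dirichlet solvability you already use. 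Both arguments gloss over the same regularity point, namely justifying the integration by parts against elements of $\HH(\Omega)$ that are merely $L^2$ near $\Gamma$, so you are at the paper's level of rigor.
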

Let $\V(\Omega) \subset L^2(\Omega, \mathbb{R}^n)$ be a linear subspace. We say that  the inverse elasticity problem possesses a uniqueness property for internal stress
distributions in $\V$ if  for any  two solutions  $U$ and $W$ of the Navier equations $\mathcal{L}U=F $ and $\mathcal{L}W=G$ in $\Omega$  with $F, G \in \V$, the equalities
$U|_{\Gamma}=W|_{\Gamma}$ and $ (\sigma_U,\nu)|_{\Gamma} =(\sigma_W, \nu)|_{\Gamma}$ imply $F \equiv G$. Here
 $\sigma_U$ and $\sigma_W$ denote the stress tensors associated with $U$ and $W$, respectively.
Since the Navier equation and the space $\V(\Omega)$ are linear,  Theorem~\ref{harm2} immediately implies
\begin{corollary}
\label{cor2}
The inverse elasticity problem possesses a uniqueness property for internal stress distributions in a linear subspace $\V(\Omega) \subset L^2(\Omega,\mathbb{R}^n)$
if and only if $\V(\Omega) \cap \Z(\Omega)=0$.
\end{corollary}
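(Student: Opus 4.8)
The proof runs exactly parallel to that of Corollary~\ref{cor1}, with Theorem~\ref{harm2} now playing the role that Theorem~\ref{harm} played in the scalar case. The plan is to use linearity to reduce the uniqueness property to a statement about non-existence of solutions of the overdetermined problem \eqref{problem2}, and then to quote Theorem~\ref{harm2} verbatim.

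First I would set up the reduction. Suppose $U$ and $W$ solve $\mathcal{L}U=F$ and $\mathcal{L}W=G$ in $\Omega$ with $F,G\in\V(\Omega)$, and suppose they share the same Cauchy data, i.e. $U|_{\Gamma}=W|_{\Gamma}$ and $(\sigma_U,\nu)|_{\Gamma}=(\sigma_W,\nu)|_{\Gamma}$. Put $V=U-W$ and $H=F-G$. Since $\mathcal{L}=\Delta+\alpha\,\grad\di$ is linear, $\mathcal{L}V=H$. The key observation is that Hooke's law makes the stress tensor $\sigma_U$ depend linearly on $U$, so $\sigma_V=\sigma_U-\sigma_W$ and therefore $(\sigma_V,\nu)|_{\Gamma}=0$; combined with $V|_{\Gamma}=0$ this shows that $V$ solves the overdetermined problem \eqref{problem2} with right-hand side $H$. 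Because $\V(\Omega)$ is a linear subspace, $H=F-G\in\V(\Omega)$ as well.

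Next I would translate the uniqueness property into these terms. By definition it holds precisely when $H=F-G$ is forced to vanish, equivalently when for every nonzero $H\in\V(\Omega)$ the problem \eqref{problem2} with right-hand side $H$ has no solution. (Note that whenever $H\neq0$ any solution of $\mathcal{L}V=H$ is automatically nonzero, since $\mathcal{L}0=0$.) Now Theorem~\ref{harm2} states that for such an $H$ a nonzero solution of \eqref{problem2} exists if and only if $H\in\Z(\Omega)$. Hence uniqueness fails exactly when there is a nonzero element of $\V(\Omega)\cap\Z(\Omega)$, which yields the asserted equivalence: uniqueness holds if and only if $\V(\Omega)\cap\Z(\Omega)=0$.

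There is no genuine analytic obstacle here, as all of the substance is already contained in Theorem~\ref{harm2}; this is why the statement is phrased as an immediate consequence. The one point that requires attention is the verification that the linear reduction transports the overdetermined boundary conditions correctly — specifically, that the traction operator $U\mapsto(\sigma_U,\nu)|_{\Gamma}$ is linear in $U$, so that the difference $V=U-W$ of two solutions again carries zero traction data. Once this linearity is recorded, the corollary follows at once.
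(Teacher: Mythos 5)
Your proposal is correct and follows exactly the route the paper takes: the paper treats Corollary \ref{cor2} as an immediate consequence of Theorem \ref{harm2} via linearity of the Navier operator, the stress tensor, and the subspace $\V(\Omega)$, which is precisely the reduction you spell out. No discrepancy to report.
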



\subsection{Checkered distributions}
\label{sec:check}
In practical applications one can often assume that the distributions of electric charge as well as of the internal stress  have  a certain structure.
For example, the geometry of the charge density distribution in the electronic component can be dictated by the structure of the component. Such structures often consist of elements with rectangular shape (see, for instance, \cite{LK, XCS}). Let us also note that similar structures appear naturally in  geophysics \cite{Tsch}. This motivates the following definition.

We say that the set $\Pi \subset \mathbb{R}^n$ is a {\it box} if  $\Pi
=[a_1,b_1)\times \dots \times [a_n,b_n)$, $a_i<b_i, i=1,\dots, n$.
Denote by  $V_c(\Pi) \subset L^2(\Pi)$ a linear subspace generated
by the characteristic functions of all boxes contained in $\Pi$. 
Elements of $V_c(\Pi)$ are called {\it checkered functions}.
Equivalently, a function $f\in L^2(\Pi)$ is checkered if $\Pi$ can
be represented as a  finite union of disjoint boxes,
$\Pi=\Pi_1\sqcup\dots \sqcup\Pi_N$, such that $f|_{\Pi_i} \equiv
{\rm const}$, $i=1,\dots N$ (such a
representation is clearly not unique).
Note that the subspace $V_c(\Pi)$ is dense in $L^2(\Pi)$.
\begin{theorem} \label{main}  Let $u$ and $w$ be solutions of the Poisson equations $\Delta u =f$ and $\Delta w=g$ in the interior of the box $\Pi \subset \mathbb{R}^n$  with $f, g \in V_c(\Pi)$.  If $u|_{\partial \Pi}=w|_{\partial \Pi}$ and $ (\nabla u,\nu)|_{\Gamma} =(\nabla w, \nu)|_{\partial \Pi}$, then $f \equiv g$.
\end{theorem}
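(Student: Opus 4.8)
The plan is to deduce Theorem~\ref{main} from Corollary~\ref{cor1}. Taking $\Omega$ to be the interior of $\Pi$ and $V(\Omega)=V_c(\Pi)$, the assertion of the theorem is exactly the uniqueness property for checkered distributions, so it suffices to show that $V_c(\Pi)\cap Z(\Pi)=0$; that is, a checkered function $h$ which is orthogonal in $L^2(\Pi)$ to every harmonic function must vanish. Writing $h=f-g$ and refining the two underlying box partitions to a common rectangular grid, I may assume $h=\sum_I v_I\,\mathbf 1_{C_I}$, where the $C_I$ are the cells of the grid determined by partition points $x_j^{(0)}<\dots<x_j^{(m_j)}$ in each coordinate $j$, and $I=(i_1,\dots,i_n)$.

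The family of harmonic functions I would test against is the complex exponentials $e^{\zeta\cdot x}$, where $\zeta\cdot x=\sum_j\zeta_j x_j$ and $\zeta$ ranges over the null cone $\mathcal C=\{\zeta\in\mathbb C^n:\ \zeta\cdot\zeta=\sum_j\zeta_j^2=0\}$; each such exponential is harmonic and lies in $L^2(\Pi)$, so orthogonality of $h$, after factoring the integral over each cell, gives
\[
\sum_I v_I\prod_{j=1}^n\frac{e^{\zeta_j x_j^{(i_j+1)}}-e^{\zeta_j x_j^{(i_j)}}}{\zeta_j}=0,\qquad \zeta\in\mathcal C .
\]
Clearing the denominators $\prod_j\zeta_j$ turns this into $\Psi(\zeta)=0$ on $\mathcal C$, where $\Psi(\zeta)=\sum_I v_I\prod_j\bigl(e^{\zeta_j x_j^{(i_j+1)}}-e^{\zeta_j x_j^{(i_j)}}\bigr)$. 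Expanding the products regroups $\Psi$ into a finite combination $\Psi(\zeta)=\sum_P S(P)\,e^{\zeta\cdot P}$ indexed by the grid nodes $P$, where each coefficient $S(P)$ is the $n$-fold mixed difference of the values $v_I$ around $P$.

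The key step is to pass from ``$\Psi$ vanishes on $\mathcal C$'' to ``$S(P)=0$ for every node $P$'', and the constraint $\zeta\in\mathcal C$ is the only real difficulty: I cannot directly invoke linear independence of exponentials over all of $\mathbb C^n$. To get around this I would restrict $\zeta$ to a single complex null line $\zeta=t\omega$, $t\in\mathbb C$, with $\omega\in\mathcal C$ chosen so that the numbers $\omega\cdot P$ are pairwise distinct over the finitely many grid nodes. Such an $\omega$ exists precisely because $n\ge 2$: the differences $P-P'$ determine finitely many hyperplanes $\{\omega\cdot(P-P')=0\}$, and the positive-dimensional cone $\mathcal C$ (two lines when $n=2$, an irreducible quadric when $n\ge 3$) is not contained in their union. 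Along this line $\Psi(t\omega)=\sum_P S(P)\,e^{t\,(\omega\cdot P)}$ is a finite sum of exponentials with distinct exponents vanishing for all $t$, whence every $S(P)=0$.

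Finally I would invert the mixed-difference map. Since the grid is a product, the linear map $(v_I)\mapsto(S(P))$ factors as the tensor product over the $n$ coordinate directions of the one-dimensional first-difference maps sending cell values to node differences; each of these is injective, as the cell values are recovered by summing the differences outward from an endpoint, so the tensor product is injective as well. Hence $S\equiv 0$ forces all $v_I=0$, i.e.\ $h\equiv 0$ and $f\equiv g$. The only genuinely nontrivial point is the middle step, the separation of nodes by a null direction, which is exactly where the hypothesis $n\ge 2$ (equivalently, the richness of harmonic functions on $\Pi$) enters; everything else is bookkeeping.
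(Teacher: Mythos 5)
Your argument is correct, and its skeleton coincides with the paper's: reduce via Corollary \ref{cor1} to showing $V_c(\Pi)\cap Z(\Pi)=0$, test a checkered $h$ against harmonic exponentials $e^{\zeta\cdot x}$ with $\zeta$ on the null cone (the paper writes these as $e^{\alpha(\Theta,x)+i\alpha(\Psi,x)}$ with $\Theta\perp\Psi$, $|\Theta|=|\Psi|=1$, which parametrizes the same family), and observe that the integral collapses to a weighted sum over grid nodes of the mixed differences $S(P)$ --- precisely the paper's discretization $\tilde f=\Upsilon(f)$ appearing in Lemma \ref{lemma:prod}. Where you genuinely diverge is in the two remaining steps. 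To conclude $S(P)=0$ for all $P$, the paper runs an asymptotic reconstruction (Proposition \ref{prop3}): $\limsup_{\alpha\to\infty}\alpha^{-1}\log|P_f(\alpha,\Theta,\Psi)|$ recovers the support function and hence the convex hull of $\supp\tilde f$, after which the vertex values are extracted by a limit and peeled off iteratively. You instead restrict to a single generic null line $t\omega$ separating the nodes and invoke linear independence of finitely many exponentials with distinct exponents --- shorter, purely algebraic, and it delivers all the $S(P)$ at once; note that your genericity condition already forces $\omega_j\neq 0$ for every $j$ (since nodes differing in a single coordinate must be separated), which is exactly the paper's admissibility hypothesis needed to justify clearing the denominators. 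Likewise you invert the discretization by observing that it is a tensor product of injective one-dimensional difference maps, whereas the paper (Proposition \ref{ups}) peels corner boxes inductively. The trade-off: your route is a cleaner pure injectivity proof, while the paper's is deliberately constructive --- the convex-hull/limit procedure is the reconstruction algorithm whose numerical behaviour is then analysed in subsection \ref{sec:chal} --- and its node-by-node formulation is what transfers directly to the elasticity case in Proposition \ref{prop4}.
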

 In other words, the inverse electrostatics problem on $\Pi$ possesses a uniqueness property for electric charge distributions given by checkered functions.

\begin{remark}
In the context of the inverse gravimetry problem,  the right--hand side of equation \eqref{Poisson} should be understood as  the mass density and the function $u$ as the gravitational potential. Therefore, Theorem~\ref{main} can be reformulated as follows: the inverse gravimetry problem possesses
a uniqueness property for mass distributions given by checkered functions.  To our knowledge, distributions of this type have not been previously studied  for the inverse gravimetry problem. Uniqueness results for other types of mass distributions 
could be found in \cite[Corollary 4.2.3]{Isak1} and  \cite[Theorem 2.1]{Isak3}.
\end{remark}

An analogue of Theorem \ref{main}  holds also for the inverse elasticity problem. Denote by
$\V_c(\Pi) \subset L^2(\Pi, {\mathbb R}^n)$ a linear
subspace generated by functions $F=(f_1,f_2,\dots,f_n)$, where $f_i
\in V_c(\Pi)$, $i=1,\dots, n$.
\begin{theorem} \label{main2} Let $U$ and $W$ be solutions of the Navier equations  $\Delta U +\alpha \grad \di U=F$ and
$\Delta W +\alpha \grad \di W=G$   in the interior of the box $\Pi \subset \mathbb{R}^n$  with $F, G \in \V_c(\Pi)$.  Suppose that $U|_{\partial \Pi}=W|_{\partial\Pi}$ and $ (\sigma_U,\nu)|_{\partial \Pi} =(\sigma_W, \nu)|_{\partial \Pi}$, where $\sigma_U$ and $\sigma_W$ are the stress tensors associated with $U$ and $W$, respectively.  Then $F \equiv G$.
\end{theorem}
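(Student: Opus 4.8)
By Corollary~\ref{cor2}, it suffices to show that $\V_c(\Pi)\cap\Z(\Pi)=0$, i.e.\ that a checkered field $F=(f_1,\dots,f_n)$ which is orthogonal in $L^2(\Pi,\mathbb{R}^n)$ to the space $\HH(\Pi)$ of Navier-harmonic fields must vanish. The plan is to probe $F$ with complex-exponential Navier-harmonic fields, exactly as one probes a scalar checkered function with the harmonic exponentials $e^{(\xi,x)}$, $\xi\cdot\xi=0$, in Theorem~\ref{main}. For $\xi,P\in\mathbb{C}^n$ a direct computation gives $\mathcal{L}\bigl(Pe^{(\xi,x)}\bigr)=\bigl[(\xi\cdot\xi)\,P+\alpha\,(P,\xi)\,\xi\bigr]e^{(\xi,x)}$, where $(\cdot,\cdot)$ denotes the bilinear pairing $\sum_j(\cdot)_j(\cdot)_j$. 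Since $\alpha\neq0$ and $1+\alpha\neq0$, this vanishes exactly when $\xi$ lies on the null cone $\xi\cdot\xi=0$ and the polarization satisfies $(P,\xi)=0$; the real and imaginary parts of such $Pe^{(\xi,x)}$ are therefore genuine real Navier-harmonic fields on $\Pi$.

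Next I would record the orthogonality relations. Put $\Phi_i(\xi)=\int_\Pi f_i\,e^{(\xi,x)}\,dx$ and $\Phi=(\Phi_1,\dots,\Phi_n)$, an entire $\mathbb{C}^n$-valued function. Testing the real $F$ against the real and imaginary parts of $Pe^{(\xi,x)}$ gives $(P,\Phi(\xi))=0$ for every isotropic $\xi$ and every $P$ with $(P,\xi)=0$. Because the pairing is nondegenerate, $\{P:(P,\xi)=0\}=\xi^{\perp}$ and $(\xi^{\perp})^{\perp}=\operatorname{span}\xi$ (this identity holds for every $\xi\neq0$, isotropic or not), so the relations force $\Phi(\xi)\in\operatorname{span}\xi$ on the null cone; equivalently $\Phi_i(\xi)\,\xi_j=\Phi_j(\xi)\,\xi_i$ whenever $\xi\cdot\xi=0$. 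This longitudinality of $\Phi$ is the one genuinely new feature compared with the scalar case, where the analogous conclusion is simply $\Phi\equiv0$ on the cone.

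I would then convert this into a statement about the boxes. Clearing the denominators produced by the one-dimensional factors $\int_{a_j}^{b_j}e^{\xi_jx_j}\,dx_j$, one has $\bigl(\prod_j\xi_j\bigr)\Phi_i(\xi)=\sum_v w_v^{(i)}e^{(\xi,v)}$, a finite exponential sum over the vertices $v\in\mathbb{R}^n$ of the constituent boxes (including those on $\partial\Pi$), whose coefficients $w_v^{(i)}$ are the point masses of the distribution $\partial^n(f_i\chi_\Pi)/\partial x_1\cdots\partial x_n$. Fix an isotropic $\zeta$ with all $\zeta_j\neq0$ for which $v\mapsto(\zeta,v)$ is injective on the (finite) vertex set; such $\zeta$ exist for $n\ge2$ because the null cone is not contained in any coordinate hyperplane nor in any of the finitely many ``collision'' hyperplanes $\{(\zeta,v-v')=0\}$. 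Restricting $\Phi_i(\xi)\xi_j=\Phi_j(\xi)\xi_i$ to the line $\xi=t\zeta$ and clearing $t^n\prod_l\zeta_l$ gives $\zeta_j\sum_v w_v^{(i)}e^{t(\zeta,v)}=\zeta_i\sum_v w_v^{(j)}e^{t(\zeta,v)}$ for all $t\in\mathbb{C}$; distinct exponents and linear independence of exponentials then yield $w_v^{(i)}\zeta_j=w_v^{(j)}\zeta_i$ for each $v$, i.e.\ the weight vector $w_v=(w_v^{(1)},\dots,w_v^{(n)})$ is parallel to $\zeta$. Choosing a second admissible isotropic direction $\zeta'$ independent of $\zeta$—again possible for $n\ge2$—forces $w_v=0$ for every vertex $v$.

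Finally, $w_v^{(i)}=0$ for all $v$ means $\partial^n(f_i\chi_\Pi)/\partial x_1\cdots\partial x_n=0$; since $f_i\chi_\Pi$ has compact support, its Fourier transform is entire and is annihilated by $\prod_j\xi_j$, hence vanishes off the coordinate hyperplanes and so identically, giving $f_i=0$. Thus $\V_c(\Pi)\cap\Z(\Pi)=0$, and Corollary~\ref{cor2} yields the theorem. I expect the crux to be the middle two steps: isolating the admissible polarizations and turning the longitudinal condition $\Phi\parallel\xi$ into the vanishing of the discrete mixed-difference weights $w_v$. Note in particular that the vector problem genuinely needs two independent isotropic directions where the scalar problem needed only one, so the verification that enough such directions exist on the null cone for every $n\ge2$ is an essential point rather than a formality.
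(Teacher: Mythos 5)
Your argument is correct, and although it probes $F$ with essentially the same family of Navier-harmonic fields as the paper (your $Pe^{(\xi,x)}$ with $\xi\cdot\xi=0$, $(P,\xi)=0$ contains the paper's $\curl(e,0,0)$ and $\curl(0,e,0)$, which for generic isotropic $\xi$ span exactly the admissible polarizations $\xi^{\perp}$), it extracts the conclusion by a genuinely different mechanism. The paper's Proposition \ref{prop4}, modelled on Proposition \ref{prop3}, sends $\alpha\to\infty$: the exponential growth rate of the pairings recovers the convex hull of $\supp\widetilde F$ via \eqref{limsup}--\eqref{326}, a $2\times2$ system in two polarizations recovers $\widetilde F$ at each extreme vertex, the vertices are peeled off inductively, and a second induction (Proposition \ref{ups}) inverts the discretization $F\mapsto\widetilde F$. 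You instead restrict the longitudinality identity $\Phi_i(\xi)\xi_j=\Phi_j(\xi)\xi_i$ to complex lines $t\zeta$ inside the null cone and invoke linear independence of $t\mapsto e^{t\lambda}$ for distinct $\lambda$, forcing each vertex weight $w_v$ to be parallel to every admissible isotropic direction and hence to vanish; the final implication $\widetilde F=0\Rightarrow F=0$ then comes for free from entirety of $\Phi_i$, bypassing Proposition \ref{ups} as well. (Two small remarks: only the ``if'' direction of your characterization of Navier-harmonic exponentials is used, so the hypotheses $\alpha\neq0$, $1+\alpha\neq0$ are not actually needed; and a single isotropic direction already suffices, since $w_v$ is real and a nonzero isotropic $\zeta$ is never a complex multiple of a real vector.) What the paper's longer route buys is an explicit, in-principle reconstruction algorithm from the Cauchy data, whose numerical behaviour is then analysed in subsection \ref{sec:chal}; what yours buys is a shorter, limit-free pure uniqueness proof, with the vector-valued ``longitudinal'' ambiguity $\Phi(\xi)\in\operatorname{span}\xi$ cleanly isolated as the only new feature over Theorem \ref{main}, working uniformly in every dimension $n\ge2$ without the switch from $\curl$ to $\grad$ discussed in the paper's closing remark of subsection \ref{sec:elast}.
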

In other words, the inverse elasticity  problem on $\Pi$ possesses a uniqueness property for  internal stress distributions with components given by
checkered functions. 

Theorems \ref{main} and \ref{main2} are proved using Corollaries \ref{cor1} and \ref{cor2}, see section \ref{sec:checkered}.

\subsection{Discussion}
\label{sec:disc}
The interest in  the  inverse problems considered in the present paper arises from a number of practical applications. For example, in microelectronics, the observation of the internal voltage distribution in a device can be very important for the testing and diagnostic of devices under development  \cite{LK, BS}.

The inverse elasticity problem naturally appears in the analysis of {\it residual stresses} \cite{Wi1, Wi2}.  These  stresses are produced in the materials as a result of non-uniform deformation during forming, heat treatment and welding processes. The effect of a residual stress field is similar to the effect of an internal force distribution, and one can be converted into the other.  Modern experimental methods, such as Scanning Probe Microscopy \cite{KB, GAT}, can be used to obtain data on the electric potential and electric field at the surfaces of the component \cite{Pr}. 
Digital image correlation \cite{CRS} can be applied to study the displacement distribution. These methods allow to obtain the Cauchy boundary conditions for electrostatic or elastic problems corresponding to real objects or components with high accuracy and fine resolution. The important question is to which extent such information can be used to find the charge (and the potential) or the internal force distributions inside the body,  and whether the corresponding inverse problems  have unique solutions. For simple distributions of internal charges or residual stresses the inverse problems can be solved easily (for example,  for a 2-D distribution of charges in a thin layer or 1-D distribution of residual stresses with a single significant stress component).
However, for general charge and internal stress distributions the issue becomes quite difficult.  While Theorems \ref{main} and \ref{main2} give a complete mathematical solution of the inverse electrostatics and elasticity problems for checkered distributions, from the viewpoint of practical applications these results are far from satisfactory, see section \ref{sec:chal}.
%


\subsection{Non-uniqueness of solutions: an example}
\label{sec:example}
One may ask whether
the analogues of Theorems \ref{main} and \ref{main2} hold for other, non-checkered,
electric charge and internal stress distributions.  Below we provide an example of a natural class
of distributions for which the solutions of the inverse problems are not unique.  Similar examples are well-known for the
inverse gravimetry problem (see \cite{Isak3}).

Let $S=S(r_1,r_2)$ be a {\it spherical layer} centered at the
origin, that is $S(r_1,r_2)=\{r_1\le |x|<r_2\}$ for some $r_2>r_1 \ge 0$.
Denote by $V_\sigma(S) \subset L^2(S)$ the linear subspace
generated by characteristic functions of  spherical
layers centered at the origin. In other words, $f\in V_\sigma(S)$ if and only if there exists a decomposition of
$S$ into a disjoint union of spherical layers $S=S_1\sqcup\dots \sqcup S_N$, such that $ f|_{S_i} \equiv {\rm const}$, $i=1,\dots N$.
We also denote by  $\V_\sigma(S) \subset
L^2(S,{\mathbb R}^n)$ the linear subspace of vector
functions whose components belong to $V_\sigma(S)$.
\begin{theorem}
\label{spherical} Let $S \subset \mathbb{R}^n$ be a spherical layer. Then

(i) $V_\sigma (S) \cap Z(S) \neq ~\{0\}$ and
(ii) $\V_\sigma(S) \cap \Z(S) \neq \{0\}.$
\end{theorem}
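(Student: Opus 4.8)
The plan is to reduce both statements to a finite-dimensional moment problem for radial functions, using the elementary fact that the integral of a spherical harmonic of positive degree over a centered sphere vanishes.

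For (i), note that if $f(x)=g(|x|)$ is radial and $h\in H(S)$, then
\[
\langle f,h\rangle_{L^2(S)}=\int_{r_1}^{r_2} g(r)\left(\int_{S^{n-1}} h(r\theta)\,d\theta\right)r^{n-1}\,dr .
\]
Expanding $h$ into spherical harmonics, only the spherically symmetric (radial) part of $h$ survives the inner integral; on the layer $S(r_1,r_2)$ the radial harmonics are spanned by $1$ and $r^{2-n}$ (by $1$ and $\log r$ when $n=2$). Hence a radial $f$ lies in $Z(S)$ if and only if $\int_{r_1}^{r_2}g(r)r^{n-1}\,dr=0$ and $\int_{r_1}^{r_2}g(r)r\,dr=0$. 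I would then take $g$ constant on three concentric layers $S(r_1,s_1)$, $S(s_1,s_2)$, $S(s_2,r_2)$: the two conditions become a homogeneous linear system of two equations in the three constants, which has a nonzero solution. The resulting $f$ is a nonzero element of $V_\sigma(S)\cap Z(S)$, proving (i).

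For (ii) I would first show that every $U\in\HH(S)=\ker\LL$ has biharmonic components. Taking the divergence of $\LL U=\Delta U+\alpha\grad\di U=0$ gives $(1+\alpha)\Delta\di U=0$, so $\di U$ is harmonic (here $1+\alpha=2(1-\mu)/(1-2\mu)\neq 0$); applying $\Delta$ to $\LL U=0$ then yields $\Delta^2 U=0$ componentwise. Since each $f_i$ is radial, $\langle F,U\rangle$ depends on $U$ only through the averages $\bar U_i(r)=|S^{n-1}|^{-1}\int_{S^{n-1}}U_i(r\theta)\,d\theta$, and spherical averaging commutes with $\Delta$ componentwise; therefore each $\bar U_i$ satisfies the radial biharmonic equation and lies in the four-dimensional space spanned by $\{1,r^2,r^{2-n},r^{4-n}\}$ (with logarithmic modifications for $n=2,4$). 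Consequently $F\in\Z(S)$ is guaranteed as soon as each $f_i$ is orthogonal in $L^2([r_1,r_2],r^{n-1}dr)$ to these four profiles. As in part (i), a radial step function subject to four linear constraints can be chosen nonzero (now using five layers), and setting $F=(f_1,0,\dots,0)$ produces a nonzero element of $\V_\sigma(S)\cap\Z(S)$.

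The only genuinely substantive points are the reductions that make the counting possible: that orthogonality of a radial function to all of $H(S)$ (respectively, to all averages of kernel fields) collapses to finitely many weighted moment conditions, and that $\ker\LL$ consists of componentwise biharmonic fields so that these averages are confined to a fixed finite-dimensional space. I expect the main care to be needed in justifying that spherical averaging commutes with $\Delta$ on the open layer and in handling the exceptional dimensions $n=2$ and $n=4$, where the basis of radial (bi)harmonic functions acquires logarithmic terms; these are routine but must be recorded. The case $r_1=0$ (a ball) only shrinks the list of admissible profiles and is therefore easier.
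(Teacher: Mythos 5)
Your argument is correct (modulo the routine interchanges you yourself flag) and reaches the conclusion by a genuinely different, more systematic route than the paper. For part (i) the paper takes only \emph{two} concentric layers and the single function $u=\Vol(B)\,\chi_A-\Vol(A)\,\chi_B$, checking orthogonality to $H(S)$ via the solid mean value identity $\int_A h=\Vol(A)\,h(0)$; this enforces just the one moment condition $\int g(r)r^{n-1}\,dr=0$ and genuinely uses that harmonic functions on $S$ obey the mean value property at the origin, i.e.\ it is tailored to the case where $S$ is a ball ($r_1=0$). Your reduction to the two weighted moments against $1$ and $r^{2-n}$ (hence three layers) is what is actually required on a layer with $r_1>0$, where $r^{2-n}\in H(S)$ and the paper's two-layer function need not be orthogonal to it. For part (ii) both proofs rest on the same key lemma --- that $\ker\LL$ consists of componentwise biharmonic fields (you are slightly more careful in recording why $1+\alpha\neq 0$) --- but the paper then applies the pointwise biharmonic mean value formula $\int_{B(0,r)}f=\omega_n r^n f(0)+\tfrac{\omega_n r^{n+2}}{2(n+2)}\Delta f(0)$ to get a $2\times 2$ system in the two unknown layer values on a ball cut into three concentric pieces, whereas you replace this by orthogonality to the four radial biharmonic profiles $1,r^2,r^{2-n},r^{4-n}$, a $4\times 5$ homogeneous system. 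The paper's version is shorter and comes with an explicitly checkable determinant; yours costs extra layers and conditions but works uniformly for balls and genuine annuli (where origin-centred mean value identities are unavailable), makes the codimension count transparent, and correctly predicts that the dimensions quoted in the paper's closing Remark ($k-1$ and $k-2$ for $k$ layers) should drop further when $r_1>0$.
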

Theorem \ref{spherical} is proved in subsection \ref{sec:sph}. Together with Corollaries \ref{cor1} and \ref{cor2},  it immediately implies
\begin{corollary} The solutions of the inverse electrostatics and elasticity problems are not unique in $V_\sigma(S)$ and $\V_\sigma(S)$, respectively.
\end{corollary}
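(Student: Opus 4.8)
The plan is to read the corollary off Theorem~\ref{spherical} together with Corollaries~\ref{cor1}--\ref{cor2}, and to spend the actual effort on Theorem~\ref{spherical}, which is where all the content sits. Indeed, by Corollary~\ref{cor1} the inverse electrostatics problem loses uniqueness on $V_\sigma(S)$ exactly when $V_\sigma(S)\cap Z(S)\neq\{0\}$, and by Corollary~\ref{cor2} the inverse elasticity problem loses it on $\V_\sigma(S)$ exactly when $\V_\sigma(S)\cap\Z(S)\neq\{0\}$. These are precisely the two nontriviality assertions in Theorem~\ref{spherical}, so the corollary is immediate once that theorem is in hand. It therefore remains to construct nonzero radial step functions (resp.\ vector fields with such components) orthogonal to $H(S)$ (resp.\ to $\HH(S)$).

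For part~(i) the main device is spherical averaging. For a radial $f$ and any $h\in H(S)$ one has
\[
\int_S f(|x|)\,h(x)\,dx=\int_{r_1}^{r_2}f(r)\,\bar h(r)\,r^{n-1}\,dr,\qquad \bar h(r):=\int_{\mathbb S^{n-1}}h(r\theta)\,d\theta,
\]
and since averaging over $O(n)$ commutes with $\Delta$, the radial function $\bar h$ is harmonic. Thus $\bar h$ lies in the two--dimensional space spanned by $1$ and $r^{2-n}$ (by $1$ and $\log r$ when $n=2$, and by $1$ alone when $r_1=0$), so membership $f\in Z(S)$ reduces exactly to the two moment conditions $\int_{r_1}^{r_2}f(r)\,r^{n-1}\,dr=0$ and $\int_{r_1}^{r_2}f(r)\,r\,dr=0$. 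As $V_\sigma(S)$ is infinite--dimensional, a nonzero step function satisfying these two linear constraints certainly exists (three radial steps already suffice), which proves~(i).

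For part~(ii) I would take $F=f\,e_1$ with $f\in V_\sigma(S)$ radial and $e_1$ a coordinate vector, so that $F\in\V_\sigma(S)$, and reduce the pairing against any $H\in\HH(S)$ to $\int_{r_1}^{r_2}f(r)\,\bar H_1(r)\,r^{n-1}\,dr$ exactly as above. The structural input is that each component of a Navier--harmonic field is biharmonic: applying $\di$ to $\LL H=0$ gives $(1+\alpha)\,\Delta\,\di H=0$, so $\di H$ is harmonic (here $\alpha=\tfrac{1}{1-2\mu}\neq-1$), and applying $\Delta$ to $\LL H=0$ then yields $\Delta^2H=0$. Since spherical averaging commutes with $\Delta^2$, each $\bar H_1$ is a radial biharmonic function and hence belongs to the finite--dimensional solution space of the radial part of $\Delta^2$ (dimension at most $4$, spanned generically by $1,\,r^{2-n},\,r^2,\,r^{4-n}$). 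Imposing orthogonality of $f$ to this fixed finite set is again finitely many linear constraints on the infinite--dimensional $V_\sigma(S)$, so a nonzero $f$ survives, and then $F=f\,e_1$ is a nonzero element of $\V_\sigma(S)\cap\Z(S)$, proving~(ii).

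The main obstacle is part~(ii): one must control the radial parts of Navier--harmonic fields, and the economical route is the observation that their components are biharmonic combined with the commutation of spherical averaging with $\Delta^2$, which confines these radial parts to a fixed finite--dimensional space. The only remaining care is bookkeeping in the degenerate dimensions $n=2,4$, where the radial (bi)harmonic solutions collide and logarithmic factors enter, and in the ball case $r_1=0$, where regularity at the origin merely shrinks the relevant space; in every case it stays finite--dimensional, which is all the construction needs.
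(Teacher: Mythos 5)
Your reduction of the corollary to Theorem~\ref{spherical} via Corollaries~\ref{cor1} and~\ref{cor2} is exactly what the paper does, and your proof of Theorem~\ref{spherical} is correct; it follows the same overall strategy (radial reduction for (i); biharmonicity of Navier--harmonic fields, established by the identical divergence-then-Laplacian computation, for (ii)) but implements it differently. The paper works with mean-value identities: for (i) it takes just two concentric layers and the function $\Vol(B)\chi_A-\Vol(A)\chi_B$, killed by the mean value theorem for harmonic functions, and for (ii) it uses the explicit biharmonic mean-value formula on balls to reduce matters to a $2\times 2$ linear system in the values on three layers, whose determinant it checks is nonzero. You instead classify the spherical averages of harmonic (span of $1,r^{2-n}$) and biharmonic (span of $1,r^{2-n},r^2,r^{4-n}$) functions on the layer and conclude by a dimension count on an underdetermined homogeneous system. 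Your route buys two things: it avoids the determinant computation entirely, and it handles the genuine annulus $r_1>0$ directly --- where $H(S)$ contains $|x|^{2-n}$, whose integral over a concentric layer is not proportional to its volume, so the paper's two-layer mean-value construction for (i) does not literally apply and its proof of (ii) is likewise written only for a ball (the annulus case being relegated to a remark). The paper's route buys economy: fewer layers and a completely explicit witness. Your treatment of the degenerate cases $n=2,4$ and $r_1=0$ is the right bookkeeping and does not affect the finite-dimensionality that drives the argument.
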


\subsection{Plan of the paper}

Section \ref{sec:checkered} is devoted to the proof of Theorems~\ref{main} and \ref{main2}. In subsection \ref{discr} an auxiliary discretization of the checkered functions is constructed.  In subsection \ref{sec:exp} we introduce a family of harmonic functions given by complex exponentials,
that are used to show that there are no nonzero checkered functions orthogonal to the space of harmonic functions.
Theorem \ref{main} then follows from Theorem \ref{harm}. In subsection
\ref{sec:elast} the above arguments are modified in order to prove Theorem \ref{main2}. Theorems \ref{harm}  and \ref{harm2} as well as Theorem \ref{spherical} are proved in section \ref{sec:other}.

\section{Inverse  problems for checkered distributions}
\label{sec:checkered}
The goal of this section is to prove  Theorems \ref{main} and \ref{main2}. We present the proofs in
three dimensions, which is the most interesting case for
applications. A similar argument works in any dimension $n\ge~2$.

\subsection{Discretization of checkered functions}
\label{discr}
Let $\Omega$ be a box as defined in section \ref{sec:check}.
For any $f \in V_c(\Omega)$, let us construct a function $\tilde f$
supported on a finite number of points. Consider an arbitrary box
$\Pi=[\beta_1^-,\beta_1^+) \times [\beta_2^-,\beta_2^+) \times
[\beta_3^-,\beta_3^+) \subset \mathbb{R}^3$. Set
\begin{equation}
\label{rule} \Upsilon(\chi_\Pi)=\sum_{\sigma_1,\sigma_2,\sigma_3 \in
\pm} \sigma_1\sigma_2\sigma_3 \mathbf{1}_{(\beta_1^{\sigma_1},
\beta_2^{\sigma_2}, \beta_3^{\sigma_3})}
\end{equation}
 Here
$\mathbf{1}_{(x,y,z)}$ is a function that takes value $1$ at the
point $(x,y,z)$ and vanishes elsewhere. The function
$\Upsilon(\chi_{\Pi})$ is supported on the vertices of $\Pi$ and
takes values $\pm 1$ at each vertex. The map $\Upsilon$ can be then
extended by linearity to the whole space $V_c(\Omega)$.

Given a function $f\in V_c(\Omega)$, set $\tilde f = \Upsilon(f)$.
Denote by $\tilde V_c(\Omega)$ the space of functions supported on
finite subsets of $\Omega$.

\begin{prop}
\label{ups} The map $\Upsilon:V_c(\Omega) \to \tilde V_c(\Omega)$ is
injective. Moreover,  there exists a constructive procedure to
recover $f\in V_c(\Omega)$ from the function $\Upsilon(f)=\tilde f$.
\end{prop}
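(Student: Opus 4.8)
The plan is to recognize $\Upsilon$ as a discrete analogue of the mixed partial derivative $\partial^3/\partial x_1\partial x_2\partial x_3$ and to invert it by an explicit discrete integration (a summation operator); this simultaneously yields injectivity and the recovery procedure. Before inverting, I would first make sure $\Upsilon$ is genuinely well defined: the functions $\chi_\Pi$ span $V_c(\Omega)$ but are far from linearly independent, so ``extend by linearity'' requires that every linear relation $\sum_k c_k\chi_{\Pi_k}=0$ be sent to $0$. I expect this consistency check to be the only real subtlety; the rest is bookkeeping.

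To settle well-definedness I would give an intrinsic, manifestly linear description of $\Upsilon$. For a checkered $f$ and a point $p\in\R^3$, all eight one-sided octant limits $f(p^{+s}):=\lim_{\varepsilon\to0^+}f(p+\varepsilon s)$, $s\in\{\pm1\}^3$, exist, and I would set $(\Upsilon f)(p)=\sum_{s\in\{\pm1\}^3}\big(\prod_{\ell}(-s_\ell)\big)\,f(p^{+s})$. A direct check on a single box shows this agrees with rule \eqref{rule}: at the lower vertex of $\Pi$ only the octant pointing into $\Pi$ contributes, with sign $(-1)^3=-1$, matching $\sigma_1\sigma_2\sigma_3$ with all $\sigma_\ell=-$, and similarly at the other vertices. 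Since this formula is linear in $f$ and depends only on octant limits, and since a checkered function that vanishes in $L^2$ vanishes on every open set (its pieces are full-dimensional boxes) so that all its octant limits are $0$, the assignment is consistent: $\Upsilon$ is well defined and linear.

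Next I would write down the candidate inverse. Let $\Psi$ send a finitely supported function $h$ to
$$(\Psi h)(x)=\sum_{\substack{p\in\supp h\\ p_\ell>x_\ell,\ \ell=1,2,3}}h(p),$$
the sum of the values of $h$ over all support points strictly dominating $x$ in every coordinate; this is patently linear in $h$ and computable in finitely many steps. The heart of the matter is the identity $\Psi\circ\Upsilon=\mathrm{id}$ on $V_c(\Omega)$, which by linearity of both maps reduces to checking $\Psi(\Upsilon(\chi_\Pi))=\chi_\Pi$ for a single box $\Pi=[\beta_1^-,\beta_1^+)\times[\beta_2^-,\beta_2^+)\times[\beta_3^-,\beta_3^+)$. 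Here $\Upsilon(\chi_\Pi)$ is supported on the eight vertices with value $\sigma_1\sigma_2\sigma_3$ at $(\beta_1^{\sigma_1},\beta_2^{\sigma_2},\beta_3^{\sigma_3})$, and a short case analysis on the position of $x$ relative to $\Pi$ suffices: if $x\in\Pi$ then the only vertex with all coordinates exceeding those of $x$ is the top vertex $(\beta_1^+,\beta_2^+,\beta_3^+)$, contributing $+1$; if $x_\ell\ge\beta_\ell^+$ for some $\ell$ then no vertex dominates $x$ and the sum is $0$; in the remaining cases some coordinate admits both $\beta_\ell^\pm$ as dominators, so the dominating vertices split into sign-cancelling pairs and again sum to $0$. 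With the half-open convention these equalities hold pointwise, not merely almost everywhere.

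Finally I would conclude: since $\Psi\circ\Upsilon=\mathrm{id}$, the map $\Upsilon$ has an explicit linear left inverse, hence is injective, and $\Psi$ itself is the desired constructive recovery of $f=\Psi(\tilde f)$ from $\tilde f=\Upsilon(f)$. Conceptually $\Psi$ is the discrete triple integral inverting the discrete mixed difference $\Upsilon$, and the argument is insensitive to dimension: in $\R^n$ one uses $2^n$ octant limits, the sign $\prod_\ell(-s_\ell)$, and the same dominance summation. The only step demanding genuine care is the well-definedness/consistency above; once $\Upsilon$ is known to be a bona fide linear map, injectivity and recovery are immediate from the single-box computation.
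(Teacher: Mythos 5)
Your proof is correct, but it takes a genuinely different route from the paper's. The paper proves Proposition \ref{ups} by an inductive ``peeling'' argument: using Lemma \ref{marked} it builds a box decomposition of $\Omega$ whose vertices contain all non-artificial nodes, then repeatedly identifies a corner box (one meeting at most one remaining box at some corner vertex), reads off the value of $f$ there from \eqref{rule}, subtracts its contribution and discards it. You instead exhibit an explicit closed-form left inverse: $f(x)=\sum \tilde f(p)$, summed over the support points $p$ of $\tilde f$ strictly dominating $x$ in every coordinate --- the discrete integration undoing the discrete mixed difference $\Upsilon$. Your single-box verification $\Psi(\Upsilon(\chi_\Pi))=\chi_\Pi$ is right (the sum factors coordinatewise into $0$, $1$ or $0$ according to whether $x_\ell<\beta_\ell^-$, $\beta_\ell^-\le x_\ell<\beta_\ell^+$ or $x_\ell\ge\beta_\ell^+$), and linearity then finishes the argument. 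Your approach buys several things: it bypasses the node/marked-node machinery and Lemma \ref{marked} entirely, it yields a one-line reconstruction formula rather than an iterative procedure, it is trivially dimension-independent, and it addresses the well-definedness of the linear extension of $\Upsilon$ (the characteristic functions of boxes are not linearly independent), a point the paper passes over silently; your octant-limit description settles it cleanly. What the paper's route buys is that the same peeling template is reused almost verbatim in the proof of Proposition \ref{prop3}, so the machinery is not wasted. The only spot I would tighten is the justification that a checkered function vanishing in $L^2$ has all octant limits zero: the clean statement is that each octant limit equals the (constant) value of $f$ on a full-dimensional sub-box adjacent to $p$, hence is determined by the a.e.\ equivalence class. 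With that phrasing your argument is complete.
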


To prove Proposition \ref{ups} we need an  auxiliary lemma below.

 Let $\{(x_l,y_l,z_l)\}_{l=1}^N$ be the
collection of vertices of all the boxes appearing in some
representation of $f$ as a linear combination of characteristic
functions of boxes. We say that a point $(a,b,c) \in \Omega$ is a
{\it node} of the function $f$ if $a=x_i, b=y_j, c=z_k$ for some $1
\le i,j,k \le N$.
A node $v$ is {\it interesting} if
$\tilde f(v) \neq 0$.   We also call a node $v$ {\it artificial} if  there exists a neighborhood of $v$ in which $f$ does not change its value across  a plane passing through $v$ and parallel to one of the coordinate planes. 
It is easy to check that all artificial nodes are not interesting (and, therefore, artificial nodes can not be determined from $\tilde f$), but the
converse is not necessarily true.

\begin{example} Let $\Omega$ be a cube with side $2$ centered at the origin $v=(0,0,0)$.
Let $f$ be a restriction to $\Omega$ of a function which is identically equal to $1$ in 
the positive and the negative octants, and vanishes elsewhere. Then $v$ is not an artificial node, but at the same time  $\tilde f(v)=0$ by \eqref{rule}
and, hence,  $v$ is  not interesting.
\end{example}

\begin{remark} One could view the difference between artificial and non-artificial nodes as follows. Let us  colour $\Omega$ in such a way that points $x,y \in \Omega$ have the same colour if and only
if $f(x)=f(y)$.  Then $\Omega$ can be represented as a disjoint union of sets $\Omega=\sqcup_{j=1}^J \Omega_j$, such that all points in $\Omega_j$,$ j=1,\dots J$,  have the same colour, 
and the points in $\Omega_i$ and $\Omega_k$, $i\neq k$ have different colours. 
Each $\Omega_j$ is a not necessarily connected union of boxes.  A node is not artificial if it is a vertex of one of the sets  $\Omega_j$, and artificial otherwise.
\end{remark}

Let $\supp \tilde f=\{(p_l,q_l,s_l)\}_{l=1}^M$ be the set of
interesting nodes. We say that a point $(a,b,c) \in \Omega$ is a
 {\it marked node} if $a=p_i, b=q_j, c=s_k$ for some $1 \le i,j,k \le
 M$.
 
 Note that the properties of being
a marked node or an interesting node do not depend on the choice of
the representation of $f$.
\begin{lemma}
\label{marked} The set of all marked nodes contains the set of
all non-artificial nodes.
\end{lemma}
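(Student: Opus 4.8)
The plan is to reinterpret the value of $\tilde f$ at a node as a discrete third-order mixed difference of $f$, and then to peel off one coordinate direction at a time. First I would extend $f$ by zero outside $\Omega$, so that $f$ becomes a compactly supported checkered function on $\mathbb{R}^3$ and all one-sided limits below are well defined. For a node $v=(a,b,c)$ and signs $\epsilon_1,\epsilon_2,\epsilon_3\in\{+,-\}$ let $f^{\epsilon_1\epsilon_2\epsilon_3}(v)$ denote the (constant) value of $f$ on the little octant at $v$ in which $x-a$, $y-b$, $z-c$ have the prescribed signs. A direct inspection of \eqref{rule} shows that, up to an overall sign,
\[
\tilde f(v)=\sum_{\epsilon_1,\epsilon_2,\epsilon_3\in\{+,-\}}\epsilon_1\epsilon_2\epsilon_3\, f^{\epsilon_1\epsilon_2\epsilon_3}(v),
\]
i.e. $\tilde f(v)$ is the alternating sum of the eight octant values of $f$ at $v$; equivalently it is the iterated backward difference $\nabla_x\nabla_y\nabla_z$ of the array of cell values of $f$ around $v$. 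In particular $\tilde f(v)$ depends only on $f$ near $v$, which is what makes interesting and marked nodes representation-independent.

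Now fix a non-artificial node $v=(a,b,c)$. By the definition of a marked node it suffices to show that $a$ is the $x$-coordinate of some interesting node; the statements for $b$ and $c$ then follow by the symmetric argument in the remaining two directions. I would consider the jump of $f$ across the plane $\{x=a\}$,
\[
h(y,z)=\lim_{t\to 0^+}\bigl(f(a+t,y,z)-f(a-t,y,z)\bigr),
\]
which is a compactly supported checkered function of $(y,z)$, constant on the cells of the grid generated by the $y$- and $z$-coordinates of the nodes of $f$. Since $v$ is non-artificial, $f$ genuinely changes across $\{x=a\}$ in a neighbourhood of $v$, so $h\not\equiv 0$. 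The key observation is the factorization
\[
\tilde f(a,\eta,\zeta)=\pm\,\tilde h(\eta,\zeta),
\]
with one fixed sign valid for every two-dimensional node $(\eta,\zeta)$: writing $\tilde f$ as $\nabla_x\nabla_y\nabla_z$ of the cell array and applying $\nabla_x$ first produces exactly the cell array of $h$, after which $\nabla_y\nabla_z$ is precisely the two-dimensional discretization $\tilde h=\Upsilon(h)$. Hence an interesting node of $f$ with $x$-coordinate $a$ exists if and only if $\tilde h\not\equiv 0$.

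It therefore remains to prove that a nonzero compactly supported checkered function has nonzero discretization. This is the two-dimensional instance of the injectivity asserted in Proposition \ref{ups}, and I must establish it directly so as not to argue in a circle. Suppose $\tilde h\equiv 0$, i.e. $\nabla_y\nabla_z G\equiv 0$ for the cell array $G$ of $h$. Then $\nabla_z G$ is independent of the $y$-index; being zero for all sufficiently large and small $y$-indices by compact support, it vanishes identically, so $G$ is independent of the $z$-index, and by the same compactness it vanishes, i.e. $h\equiv 0$. Consequently $h\not\equiv 0$ forces $\tilde h\not\equiv 0$, which yields an interesting node with $x$-coordinate $a$, and symmetry completes the proof. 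I expect the main technical point to be the careful verification of the mixed-difference interpretation of $\tilde f$ together with its factorization through $\nabla_x$, and the bookkeeping of boundary vertices once $f$ has been extended by zero; the remaining telescoping step is elementary.
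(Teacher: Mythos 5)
Your proof is correct, but it takes a genuinely different route from the paper's. The paper argues by contraposition: if $v$ is not marked, then some coordinate plane through $v$ contains no interesting nodes, and an induction that repeatedly peels off corner boxes adjacent to that plane shows that $f$ has no jump across it, so $v$ is artificial. You instead argue directly: you identify $\tilde f(v)$, up to an overall sign, with the alternating sum of the eight octant values of $f$ at $v$ --- the mixed difference $\nabla_x\nabla_y\nabla_z$ of the cell array --- which as a by-product gives a clean proof that $\Upsilon$ is well defined independently of the chosen representation of $f$ (a fact the paper only asserts). Factoring out $\nabla_x$ shows that the restriction of $\tilde f$ to the plane $\{x=a\}$ is exactly the two-dimensional discretization of the jump $h$ of $f$ across that plane, and your telescoping argument (a nonzero compactly supported checkered function has nonzero discretization) replaces the paper's geometric box-removal induction. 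The two proofs establish contrapositive versions of the same plane-by-plane statement, but yours is more algebraic and arguably more robust: the paper's induction has to select at each step a node where at most two of the remaining boxes meet, a bookkeeping issue your difference-operator computation sidesteps entirely. The points that need care --- the zero extension of $f$ across $\partial\Omega$ so that all octant values are defined at boundary vertices, and the box-by-box verification of the octant-sum identity --- are exactly the ones you flag, and both check out.
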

\begin{proof} Without loss of generality, suppose that the node
$(0,0,0)$  is not marked. This means that among interesting nodes
there are either no points with $x=0$, or with $y=0$, or with $z=0$.
In each case, the corresponding plane (say, $x=0$) does not contain
interesting nodes. Let us show that the function $f$ does not change
its value across this plane. This would mean that
all nodes contained in this plane are  artificial, including
$(0,0,0)$.

Consider a  decomposition of $\Omega$ into boxes,  such that the set of all their vertices coincides with the set of all nodes of $f$ (this could be achieved by
constructing planes through each node parallel to the coordinate planes).  It follows from the definition of a node that $f$ is constant on each of these boxes.  Take one of the corner nodes belonging to the plane  $x=0$ (i.e. a node lying on one of the edges of $\Omega$). At each such node
at most two boxes meet. Therefore, if this  node is not
interesting, the values of $f$ at the boxes adjacent to it are equal and
hence the node is artificial.   Note that by  formula \eqref{rule}, the  total contribution of these two boxes to the value of $\tilde f$ at any other node lying on the plane $x=0$  is zero.
Let us throw away these two boxes and pick another node where at most two of the remaining boxes meet.  Again, the value of $\tilde f$ at this node is zero and hence
the values of $f$ at the boxes adjacent to it  coincide. Therefore, this node is also artificial. We repeat the procedure until all boxes adjacent to the plane $x=0$ are
thrown away.  At each step we get artificial nodes only. This
completes the proof of the lemma.
\end{proof}
Let us now prove Proposition \ref{ups}. The proof is based on a similar inductive argument as above.  We start at a corner box,
on which  formula \eqref{rule} allows us to reconstruct  in an unambiguous way the value of $f$   from the value of $\tilde f$ on the corresponding corner vertex. We remove that box, move to an adjacent one
and repeat the procedure.  A similar  approach will be used again in the proof of Proposition \ref{prop3}. 
\begin{proof} As follows from Lemma \ref{marked}, knowing $\supp \tilde f$
allows us to construct a decomposition of $\Omega$ into boxes, whose
vertices include all non-artificial nodes. We know the values of
$\tilde f$ at each vertex of these boxes. Let us now reconstruct the
value of $f$ at each of the boxes using the following inductive
procedure. Start with a vertex that is also a vertex of $\Omega$,
and take the box that contains it (there is a unique box with this
property). Since there are no other boxes containing this vertex, by
\eqref{rule}, the value of $\tilde f$ at this vertex determines the
value of $f$ at the box. We subtract the contribution of this box to
$\tilde f$, throw away this box and take one of the new corner
vertices, at which at most two of the remaining boxes  meet.  At each step of this procedure we determine the value of
$f$ on the corner box, and reduce the number of boxes by one. Since
the number of boxes is finite, eventually we will determine the
value of $f$ on each box. 
\end{proof}

\subsection{Exponential functions}
\label{sec:exp}
 Let
$$e=e(x)=e(\alpha,\Theta,\Psi;x)=e^{\alpha(\Theta,x)+i\alpha(\Psi,x)}$$
be a function of the variable $x\in \mathbb{R}^3$, depending on the
parameters $0\neq \alpha \in \mathbb{R}, \Theta\in \mathbb{R}^3$,
$\Psi \in \mathbb{R}^3$, such that $(\Theta, \Psi)=0$,
$|\Theta|=|\Psi|=1$. It is easy to check that $e(x) \in
H(\mathbb{R}^3)$.

We say that a pair of vectors $(\Theta,\Psi)$ is {\it admissible} if
the plane it generates is not  orthogonal to any of the coordinate
axes. Set
\begin{equation}
\label{int:vol}
 P_f(\alpha, \Theta, \Psi)=(f,
e(\alpha,\Theta,\Psi;x))=\int_\Omega f(x)
e^{\alpha(\Theta,x)+i\alpha(\Psi,x)}\,dx.
\end{equation}
\begin{lemma}
\label{lemma:prod} Let $\{v_j\}$ be the set of interesting nodes of
$f\in V_c(\Omega)$. Then, for any $\alpha \neq 0$ and any admissible
pair $(\Theta, \Psi)$ we have:
\begin{equation}
\label{prod} P_f(\alpha,\Theta,\Psi)=C \sum_{v_j} \tilde
f(v_j)\,e(\alpha,\Theta, \Psi;v_j),
\end{equation}
where
\begin{equation}
\label{const} C=C(\alpha,\Theta, \Psi)=\frac{1}{\alpha^3}
\prod_{l=1}^3 \frac{1}{\Theta_l+i\Psi_l}.
\end{equation}
\end{lemma}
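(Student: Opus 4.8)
The plan is to reduce the identity to the case of a single box and then compute the integral directly. Both sides of \eqref{prod} are linear in $f$: the left-hand side by linearity of the integral in \eqref{int:vol}, and the right-hand side because $\tilde f=\Upsilon(f)$ and $\Upsilon$ was defined by linearity on $V_c(\Omega)$. It therefore suffices to establish \eqref{prod} when $f=\chi_\Pi$ is the characteristic function of an arbitrary box $\Pi=[\beta_1^-,\beta_1^+)\times[\beta_2^-,\beta_2^+)\times[\beta_3^-,\beta_3^+)$.

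For such $f$ the exponential factors across the coordinates, $e^{\alpha(\Theta,x)+i\alpha(\Psi,x)}=\prod_{l=1}^3 e^{\alpha(\Theta_l+i\Psi_l)x_l}$, so the integral in \eqref{int:vol} splits as a product of three one-dimensional integrals over the intervals $[\beta_l^-,\beta_l^+)$. Each of these equals $\frac{1}{\alpha(\Theta_l+i\Psi_l)}\bigl(e^{\alpha(\Theta_l+i\Psi_l)\beta_l^+}-e^{\alpha(\Theta_l+i\Psi_l)\beta_l^-}\bigr)$. This is precisely the step where admissibility enters: the plane spanned by $\Theta$ and $\Psi$ is orthogonal to the $x_l$-axis exactly when $\Theta_l=\Psi_l=0$, i.e. when $\Theta_l+i\Psi_l=0$; admissibility rules this out for every $l$, so no denominator vanishes and the elementary antiderivative applies. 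Collecting the three scalar prefactors yields exactly the constant $C$ of \eqref{const}, namely $\prod_{l=1}^3\frac{1}{\alpha(\Theta_l+i\Psi_l)}=\frac{1}{\alpha^3}\prod_{l=1}^3\frac{1}{\Theta_l+i\Psi_l}$.

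It remains to expand the product of the three differences. Distributing, one obtains a sum of $2^3$ terms indexed by $(\sigma_1,\sigma_2,\sigma_3)\in\{\pm\}^3$, the term for a given choice of signs carrying the factor $\sigma_1\sigma_2\sigma_3$ and equal to $\prod_{l=1}^3 e^{\alpha(\Theta_l+i\Psi_l)\beta_l^{\sigma_l}}=e(\alpha,\Theta,\Psi;(\beta_1^{\sigma_1},\beta_2^{\sigma_2},\beta_3^{\sigma_3}))$, the value of $e$ at the corresponding vertex of $\Pi$. Comparing with \eqref{rule}, this signed sum over vertices is exactly $\sum_v \Upsilon(\chi_\Pi)(v)\,e(\alpha,\Theta,\Psi;v)=\sum_v \tilde f(v)\,e(\alpha,\Theta,\Psi;v)$, which proves \eqref{prod} for $f=\chi_\Pi$. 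Passing back to a general $f\in V_c(\Omega)$ by linearity, one only has to note that nodes $v$ with $\tilde f(v)=0$ contribute nothing, so the sum over all nodes reduces to the sum over the interesting nodes $\{v_j\}$.

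There is no serious analytic obstacle here; the computation is an elementary product of one-dimensional integrals. The two points requiring care are, first, that admissibility is \emph{exactly} the nonvanishing of each $\Theta_l+i\Psi_l$ (guaranteeing both that the integration is valid and that $C$ in \eqref{const} is finite), and second, that the algebraic expansion of the product of differences reproduces verbatim the signed vertex sum defining $\Upsilon$ in \eqref{rule}. The latter is the conceptual heart of the lemma: it is what links the Fourier--Laplace type integral $P_f$ to the discretization $\tilde f$, and it is the reason the interesting nodes carry all the information needed later to conclude that no nonzero checkered function is orthogonal to $H(\Omega)$.
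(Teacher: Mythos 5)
Your proof is correct and is exactly the ``direct computation of the triple integral'' that the paper's one-line proof refers to: reduce by linearity to a single box, factor the integral into three one-dimensional integrals whose denominators $\alpha(\Theta_l+i\Psi_l)$ are nonzero precisely by admissibility, and match the expanded product of differences with the signed vertex sum defining $\Upsilon$ in \eqref{rule}. No discrepancy with the paper's argument.
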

Note that the constant $C$ is well-defined for any admissible pair
$(\Theta, \Psi)$.
\begin{proof} The result follows from \eqref{rule} by a direct computation of the triple
integral \eqref{int:vol}. 
\end{proof}
Since any function $e(x)$ is harmonic, the right-hand side in
\eqref{prod} can be computed using the boundary data $\phi_1,\phi_2$
of problem \eqref{problem} by Green's formula:
$$P_f(\alpha, \Theta, \Psi)=\int_{\Gamma} \left(e(x)\phi_2
-\frac{\partial e(x)}{\partial n}\,\phi_1 \right)\, ds
$$
\begin{prop}
\label{prop3} Knowing the value of $P_f(\alpha, \Theta, \Psi)$ for
any $\alpha \neq 0$ and any admissible pair $(\Theta, \Psi)$, one
can reconstruct the function $\tilde f$.
\end{prop}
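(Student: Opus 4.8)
The plan is to reduce the statement to a reconstruction problem for a finite exponential sum and then to peel off the interesting nodes one at a time, exactly in the spirit of the corner induction used for Proposition~\ref{ups}. By Lemma~\ref{lemma:prod} and formula~\eqref{const}, for every admissible pair $(\Theta,\Psi)$ the explicitly known factor $C(\alpha,\Theta,\Psi)$ is nonzero, so dividing \eqref{prod} by $C$ yields the quantity
\begin{equation*}
\Lambda(\alpha,\Theta,\Psi):=\frac{P_f(\alpha,\Theta,\Psi)}{C(\alpha,\Theta,\Psi)}=\sum_{v_j}\tilde f(v_j)\,e^{\alpha(\Theta,v_j)+i\alpha(\Psi,v_j)},
\end{equation*}
as an explicitly computable function of $\alpha$, the sum running over the finite set of interesting nodes $v_j$. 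Thus it suffices to show that the finite weighted point set $\{(v_j,\tilde f(v_j))\}$ is determined by the functions $\Lambda(\,\cdot\,,\Theta,\Psi)$.

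The key device is the asymptotics of $\Lambda$ as $\alpha\to+\infty$, which singles out the \emph{extreme} (``corner'') node in the direction $\Theta$. First I would fix a direction $\Theta$ lying outside the finite union of hyperplanes $\{(\Theta,v_j-v_k)=0\}_{j\ne k}$, so that the linear functional $v\mapsto(\Theta,v)$ attains its maximum over the interesting nodes at a unique node $v_{\max}$, and choose any $\Psi$ making $(\Theta,\Psi)$ admissible. Since the exponents of the terms in $\Lambda$ have real parts $(\Theta,v_j)$, all strictly smaller than $(\Theta,v_{\max})$ except for the single dominant term, one obtains
\begin{equation*}
\lim_{\alpha\to+\infty}\frac{1}{\alpha}\log\bigl|\Lambda(\alpha,\Theta,\Psi)\bigr|=\max_{j}(\Theta,v_j)=(\Theta,v_{\max}).
\end{equation*}
The right-hand side is the support function of the (finite) support of $\tilde f$; letting $\Theta$ range over a small open set of directions on which the maximizer $v_{\max}$ does not change, this support function is linear in $\Theta$, and evaluating it at three linearly independent directions recovers the position $v_{\max}\in\mathbb{R}^3$.

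Once $v_{\max}$ is known, the full exponent $\mu_{\max}=(\Theta,v_{\max})+i(\Psi,v_{\max})$ is known, and because every remaining exponent has strictly smaller real part,
\begin{equation*}
\tilde f(v_{\max})=\lim_{\alpha\to+\infty}\Lambda(\alpha,\Theta,\Psi)\,e^{-\alpha\mu_{\max}}.
\end{equation*}
I would then \emph{peel off} this node: subtracting the now fully determined term $\tilde f(v_{\max})\,e^{\alpha(\Theta,v_{\max})+i\alpha(\Psi,v_{\max})}$ from $\Lambda$ produces the analogous exponential sum for the point set with $v_{\max}$ deleted, to which the same procedure applies. Since the number of interesting nodes is finite, this induction terminates and reconstructs all pairs $(v_j,\tilde f(v_j))$, that is, the function $\tilde f$.

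The main point requiring care is the genericity bookkeeping behind the ``corner'' step: one must check that admissible pairs $(\Theta,\Psi)$ with a unique maximizer exist in abundance (they form an open dense set, since the obstructions---non-admissibility and ties $(\Theta,v_j)=(\Theta,v_k)$---are confined to finitely many hyperplanes), and that an open cone of such directions $\Theta$ sharing the same extreme node suffices to pin down its position. The underlying reason the scheme succeeds is the linear independence of the exponentials $\alpha\mapsto e^{\alpha\mu}$ for distinct $\mu$, which guarantees that the data $\Lambda(\,\cdot\,,\Theta,\Psi)$ cannot arise from two different weighted point sets; the asymptotic peeling merely makes this reconstruction constructive, as promised after Proposition~\ref{ups}.
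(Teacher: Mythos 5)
Your proof is correct and follows essentially the same route as the paper: divide $P_f$ by the explicit constant $C$, read off an extreme node from the $\alpha\to\infty$ asymptotics of $\frac{1}{\alpha}\log|P_f/C|$ (the support function of $\supp\tilde f$), extract its weight by a limit against the dominant exponential, and peel off nodes by a finite induction. The only cosmetic differences are that the paper recovers the whole convex hull and all of its vertices at once (using $\limsup$) before subtracting, whereas you isolate a single extreme node per generic direction $\Theta$ with a unique maximizer; both are valid.
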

\begin{proof}
Let $K$ be the convex hull of $\supp \tilde f$. It is easy to see
that $K$ is a convex polyhedron; let $w_j$ be its vertices. Then, for
any $\Theta \in \mathbb{R}^3$ and any $\Psi$ chosen in such a way
that the pair $(\Theta, \Psi)$ is admissible,  we have:
\begin{equation}
\label{limsup} \limsup_{\alpha \to \infty}
\frac{\log|P_f(\alpha,\Theta,\Psi)|}{\alpha}=\max_j(w_j,\Theta)=\max_{y\in K}
(y,\Theta),
\end{equation}
where the first equality follows from \eqref{prod} and the second
one from a well-known fact that the maximum of a linear functional
on a convex polyhedron is attained at one of the  vertices. At the
same time, a convex set $K$ can be represented as the intersection
of its supporting half-spaces:
\begin{equation}
\label{326}
K=\cap_{\Theta \in \mathbb{R}^3} \{ x \in \mathbb{R}^3\,|\,
(x,\Theta) \le \max_{y\in K} (y, \Theta)\}.
\end{equation}
Therefore, by \eqref{limsup}, we can recover $K$ and, in particular,
all its vertices $w_j$, $j=1,\dots, N$.

In order to recover the values $\tilde f(w_j)$ we use the following
procedure. Let $\Theta(j)$ be an external unit normal vector to a plane passing through $w_j$ and not intersecting the convex set $K$ (external means here that $\Theta(j)$ points to the half-space not containing $K$). One can easily check that in this case
\begin{equation}
\label{thetacond} (\Theta(j),w_j)>(\Theta(j),w_k)
\end{equation}
 for all $k \neq j$, $k=1,\dots,N$.
 
Choose $\Psi(j)$ in such a
way that the pair $(\Theta(j), \Psi(j))$ is admissible. We have:
\begin{equation}
\label{327}
\tilde f(w_j)=\lim_{\alpha \to
\infty}\frac{P_f(\alpha,\Theta(j),\Psi(j))}{C(\alpha,\Theta(j),\Psi(j))\,
e(\alpha_,\Theta(j),\Psi(j),w_j)}.
\end{equation}
This allows us to  determine the values of $\tilde f$ at all vertices
$w_j$, $j=1,\dots,N$. Using Lemma \ref{lemma:prod} we can subtract
the contributions of these nodes from $P_f(\alpha,\Theta,\Psi)$ and
repeat the procedure. Since at each step the number of nodes
decreases, the number of steps will be finite and at the end we will
recover all elements of $\supp \tilde f$ and the values of $\tilde
f$ at each of these points.
\end{proof}
Combining Propositions \ref{prop3} and \ref{ups} we obtain the proof
of  Theorem \ref{main}.
\begin{remark} The results of this section generalize in a straightforward way to any dimension $n\ge 2$.
Note that in dimension $n=2$ the admissibility assumption can be
omitted, because for any orthogonal nonzero vectors $\Theta, \Psi
\in {\mathbb R}^2$, the denominators in \eqref{const} are
automatically nonzero.
\end{remark}

\subsection{Proof of Theorem \ref{main2}}
\label{sec:elast}
Let us indicate how the proof of Theorem \ref{main}  can be modified in order to
prove Theorem \ref{main2}. Let $F=(f_1,f_2,f_3) \in \V_c(\Omega)$
and let $\widetilde F = (\tilde f_1, \tilde f_2, \tilde f_3)$ be its
discretization in the sense of section \ref{discr}. We say that $v_j
\in \Omega$ is a node of $F$ if it is a node of one of the
functions $f_i$, $i=1,2,3$. As before, consider a harmonic function
$$e:=e(\alpha,\Theta,\Psi;x)=e^{\alpha(\Theta,x)+i\alpha(\Psi,x)}.$$ It
is easy to check that
$$\curl(e(\alpha,\theta,\Psi;x),0,0)=(0,\alpha(\Theta_3+i \Psi_3)e,
-\alpha (\Theta_2+i\Psi_2)e) \in \HH(\Omega).$$ This follows from
the fact that $e$ is harmonic and that $\di \curl =0$. Similarly,
$\curl(0,e(\alpha,\theta,\Psi;x),0) \in \HH(\Omega)$.

Set
$$
\Ph_F^1(\alpha, \Theta, \Psi)=(F,
\curl(e(\alpha,\theta,\Psi;x),0,0)),$$ $$ \Ph_F^2(\alpha, \Theta,
\Psi)=(F, \curl(0,e(\alpha,\theta,\Psi;x),0))$$
(now $(\cdot,\cdot)$ means the natural inner product in $L_2(\Omega,\R^3)$).
Theorem \ref{main2} can be now deduced from Proposition \ref{ups}
and the following analogue of Proposition \ref{prop3}:
\begin{prop}
\label{prop4} Knowing the value of $\Ph^l(\alpha, \Theta, \Psi)$,
$l=1,2$,  for any $\alpha \neq 0$ and any admissible (in the sense
of section \ref{sec:exp}) pair $(\Theta, \Psi)$ one can reconstruct
the function $\widetilde F$.
\end{prop}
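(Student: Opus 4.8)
The plan is to follow the proof of Proposition \ref{prop3}, reconstructing $\widetilde F$ node by node over the convex hull of its support; the one genuinely new feature is that the two scalar data $\Ph_F^1,\Ph_F^2$ encode only \emph{two} linear combinations of the three component values at each node, so a single admissible pair $(\Theta,\Psi)$ will not suffice. First I would record the explicit form of the data. Writing $c_l:=\Theta_l+i\Psi_l$ and using
$$\curl(e,0,0)=(0,\alpha c_3 e,-\alpha c_2 e),\qquad \curl(0,e,0)=(-\alpha c_3 e,0,\alpha c_1 e),$$
the definitions of $\Ph_F^1,\Ph_F^2$ give at once
$$\Ph_F^1=\alpha c_3\,P_{f_2}-\alpha c_2\,P_{f_3},\qquad \Ph_F^2=-\alpha c_3\,P_{f_1}+\alpha c_1\,P_{f_3}.$$
Applying Lemma \ref{lemma:prod} to each component ($P_{f_i}=C\sum_j \tilde f_i(v_j)\,e(\alpha,\Theta,\Psi;v_j)$ with $C=1/(\alpha^3 c_1 c_2 c_3)$) turns $\Ph_F^1,\Ph_F^2$ into explicit exponential sums over the nodes. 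Note that $c_1\curl(e,0,0)+c_2\curl(0,e,0)+c_3\curl(0,0,e)=0$, so adjoining the third curl yields no new relation; the missing information must instead come from varying $\Psi$.

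Next I would recover the convex hull $K$ of $\supp\widetilde F:=\supp\tilde f_1\cup\supp\tilde f_2\cup\supp\tilde f_3$, exactly as in \eqref{limsup}--\eqref{326}. For fixed $\Theta$ and admissible $\Psi$,
$$\limsup_{\alpha\to\infty}\frac{\log|\Ph_F^1|}{\alpha}=\max\{(v,\Theta):v\in\supp\tilde f_2\cup\supp\tilde f_3\},$$
and similarly for $\Ph_F^2$ with $\supp\tilde f_1\cup\supp\tilde f_3$, provided the coefficient of the dominant exponential does not vanish. Every vertex $w$ of $K$ lies in some $\supp\tilde f_i$ and hence contributes to $\Ph_F^1$ or $\Ph_F^2$; moreover, for fixed $\Theta$ the corresponding leading coefficient (for instance $c_3\tilde f_2(w)-c_2\tilde f_3(w)$) is a nonconstant function of $\Psi$ unless the relevant values all vanish, so it is nonzero for generic admissible $\Psi$. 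Thus for each $\Theta$ the larger of the two growth rates equals $\max_{y\in K}(y,\Theta)$, and \eqref{326} reconstructs $K$ together with its vertices $w_1,\dots,w_N$.

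To recover the three values at a vertex $w=w_j$, I would fix an external normal $\Theta=\Theta(j)$ as in \eqref{thetacond}, chosen within the open normal cone at $w$ to have all coordinates nonzero, so that \emph{every} $\Psi\perp\Theta$ is admissible. For each such $\Psi$ an analogue of the limit \eqref{327} extracts the two leading coefficients
$$A(\Psi)=c_3\tilde f_2(w)-c_2\tilde f_3(w),\qquad B(\Psi)=-c_3\tilde f_1(w)+c_1\tilde f_3(w).$$
Letting $\Psi$ run over the circle of unit vectors orthogonal to $\Theta$ makes $c_1,c_2,c_3$ vary; two suitable choices $\Psi',\Psi''$ then give a $2\times2$ system for $(\tilde f_2(w),\tilde f_3(w))$ with nonzero determinant, after which $B(\Psi)$ yields $\tilde f_1(w)$. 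Having obtained all three values at every vertex of $K$, I would subtract their contributions from $\Ph_F^1,\Ph_F^2$ using Lemma \ref{lemma:prod}, which strictly shrinks $\supp\widetilde F$, and iterate until all nodes and values are recovered; combined with Proposition \ref{ups}, this proves Theorem \ref{main2}.

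The hard part is exactly the under-determination flagged above: a single admissible pair delivers only two of the three values at the dominant node, and the linear dependence of the three curls prevents obtaining a third relation by a different choice of harmonic field. The crux is therefore to verify that sweeping $\Psi$ along the admissible circle orthogonal to a fixed isolating $\Theta$ produces enough independent equations, i.e.\ that the relevant determinant in $(\Psi',\Psi'')$ is not identically zero and that an isolating $\Theta$ with nonvanishing coordinates exists. Both are short direct checks, and they are the only points where the elasticity argument genuinely departs from the electrostatic one.
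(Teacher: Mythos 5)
Your proposal is correct and follows essentially the same route as the paper's own proof: expand $\Ph_F^1,\Ph_F^2$ as exponential sums over the nodes via Lemma \ref{lemma:prod}, arrange that the coefficient $c_3\tilde f_2(v)-c_2\tilde f_3(v)$ cannot vanish unless both values do (the paper imposes the explicit condition $\Theta_3\Psi_2-\Theta_2\Psi_3\neq 0$ where you invoke genericity in $\Psi$), recover the convex hull of $\supp\widetilde F$ by the $\limsup$ argument, solve a $2\times 2$ system from two choices of $\Psi$ orthogonal to an isolating $\Theta(j)$ to get $\tilde f_2,\tilde f_3$ at each vertex, read off $\tilde f_1$ from $\Ph_F^2$, then subtract and iterate. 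Your observation that the three curls are linearly dependent (so the extra equation must come from varying $\Psi$) is a nice clarification not made explicit in the paper, but the argument is the same.
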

\begin{proof}
Similarly to  Lemma \ref{lemma:prod}, we have:
\begin{multline}
\label{prod2}\Ph_F^1(\alpha,\Theta,\Psi)=
\frac{1}{\alpha^2}\prod_{l=1}^3 \frac{1}{\Theta_l+i\Psi_l}
\sum_{v_j} e(v_j)(\tilde f_2(v_j)(\Theta_3+i\Psi_3) - \\ \tilde
f_3(v_j)(\Theta_2+i\Psi_2)).
\end{multline}
When choosing the unit vector $\Psi$, we will make sure
that, apart from the admissibility condition,
the following condition is satisfied:
$$\Theta_3 \Psi_2 -
\Theta_2 \Psi_3 \neq 0.$$ This condition guarantees that if $$\tilde
f_2(v_j)(\Theta_3+i\Psi_3) - \\ \tilde
f_3(v_j)(\Theta_2+i\Psi_2)=0,$$ this automatically implies $\tilde
f_2(v_j)=\tilde f_3(v_j)=0$, and so no term in the sum \eqref{prod2}
may ``accidentally'' vanish.  Therefore, the contribution of each
interesting node will be taken into account. Arguing in the same way
as in the proof of Proposition \ref{prop3} we can recover the convex
hull of $\supp \tilde f_2\cup\supp \tilde f_3$.

Taking $\Ph_F^2(\alpha,\Theta,\Psi)$  instead of
$\Ph_F^1(\alpha,\Theta,\Psi)$ in the argument above we recover the
convex hull of $\supp \tilde f_1\cup\supp \tilde f_3$. Taking a
union of these two sets, we recover the convex hull of $\supp
\widetilde F$.

Let $w_j$ be a vertex of the convex hull of $\supp \widetilde F$.
Choose a  unit vector $\Theta(j)$ satisfying \eqref{thetacond} as in
the proof of Proposition \ref{prop3}. Consider two admissible pairs
$(\Theta(j), \Psi^1(j))$ and $(\Theta_j, \Psi^2(j))$. Using
\eqref{prod2} we can calculate
$$\tilde f_2(w_j)(\Theta_3(j)+i\Psi_3^k(j)) - \tilde
f_3(w_j)(\Theta_2(j)+i\Psi_2^k(j)), \quad k=1,2.$$  We obtain a system
of two linear equations on $\tilde f_3(w_j)$ and $\tilde f_2(w_j)$.
Clearly, we can choose the admissible pairs $(\Theta(j), \Psi^1(j))$ and
$(\Theta_j, \Psi^2(j))$ in such a way that the determinant of this
system is nonzero. Thus, we can compute $\tilde f_3(w_j)$ and
$\tilde f_2(w_j)$.

Applying the same argument to $\Ph_F^2(\alpha,\Theta,\Psi)$, we
compute $\tilde f_1(w_j)$. Therefore, we have computed $\widetilde
F(w_j)$, and this can be done for any vertex of the convex hull of
$\supp \widetilde F$. As in the proof of Proposition \ref{prop3}, we
subtract the contributions of these nodes from
$\Ph_F^1(\alpha,\Theta,\Psi)$ and $\Ph_F^2(\alpha,\Theta,\Psi)$, and
repeat the argument. The process will stop after a finite number of
steps because the number of nodes of $\tilde F$ is finite, and it
decreases at each step. This completes the proof of Proposition
\ref{prop4} and of Theorem \ref{main2}.
\end{proof}
\begin{remark} Instead of using the curl in the proof of Proposition
\ref{prop4}, we could  take $\grad e(\alpha,\theta,\Psi;x)$.
Clearly, $$\grad e(\alpha,\theta,\Psi;x)~\in~\HH(\Omega).$$ In this
case, for each $\Theta(j)$ we need to consider three admissible
pairs $(\Theta(j), \Psi(j;k))$, $k=1,2,3$, in order to get a system
of three linear equations on $\tilde f_k(w_j)$, $k=1,2,3$. The rest
of the proof goes along the same lines as above. The advantage of
this approach is that it works in any dimension, while the curl is
defined only in dimension three.
\end{remark}
\subsection{Computational challenges}
\label{sec:chal} It is convenient to prove Theorems \ref{main} and
\ref{main2} using the exponential functions introduced in subsection
\ref{sec:exp}. In principle, our proof could be presented as an
algorithm that allows to reconstruct in a unique way the solutions
of the Poisson and Navier equations from the corresponding boundary
values.  However, numerical implementation of our approach faces
serious computational difficulties that we describe below. For
simplicity, a $2$--dimensional example is presented.

We have tested  the developed algorithm for the solution of the inverse electrostatic
problem 
on a rectangle $\Omega \subset \mathbb{R}^2$ with boundary $\Gamma$,
containing two rectangular charged areas (Fig. 1). The boundary
conditions corresponding to this problem were obtained using the Green
function method implemented numerically. In other words, we computed
a solution $u$ of the equation $\Delta u =f$ on the whole plane
using Green's function , and calculated numerically its values
as well as the values of its normal derivative on $\Gamma$. Here $f$
is the characteristic function of the total charged area.
\begin{figure}
\begin{center}
\includegraphics[width=6cm]{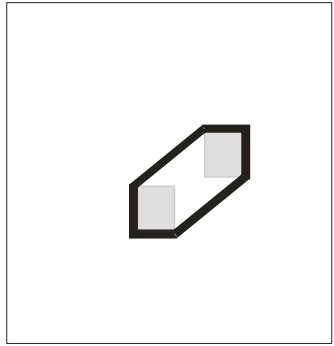}
\caption{Geometry of the test problem. Gray areas represent charge
areas. Thick black line represents the convex envelope of charge
areas. Thin gray line represents the boundary $\Gamma$ of the domain
$\Omega$.}
 \end{center}
\end{figure}

It was found that if $P_f(\alpha,\Theta,\Psi)$  is calculated directly
using the integration over $\Omega$,  then the procedure based on
\eqref{limsup} and \eqref{326} produces the convex hull of the
charged areas with high accuracy. However, when
$P_f(\alpha,\Theta,\Psi)$  is calculated using the integration over
the boundary,  the algorithm based on \eqref{limsup} produces the
convex hull occupying the whole $\Omega$. Such a drastic difference
is the result of small numerical errors in the boundary conditions
determined by the numerical solution of the direct problem,  and also
in the numerical integration over the boundary.

Strong sensitivity to numerical errors
arises from a specific nature of exponential functions. When $\alpha$ is large, these functions
are rapidly increasing in one direction and rapidly oscillating in the orthogonal direction.
As a result, small errors in boundary conditions are multiplied by large factors and the convex
hull estimate based on \eqref{326} and \eqref{327} becomes distorted. For small  values of $\alpha$, the
value of $\log |P_f(\alpha,\Theta,\Psi)|/\alpha$  obtained using the integration over $\Omega$ is close to the one
obtained using the integration over $\Gamma$. However, as $\alpha$ increases these two values diverge.

In order to verify the validity of the computational model,  calculations were performed for the constant charge density: $f \equiv 1$ on $\Omega$.
In this case, the precise value of  $P_f(\alpha, \Theta,\Psi)$ can be computed analytically. For all  values of $\alpha$, the analytical equations produced the
same  $\log |P_f(\alpha,\Theta,\Psi)|/\alpha$  values,  no matter if  the integration was performed  over $\Omega$ or its boundary $\Gamma$.
However, in the  numerical analysis of this problem,  the area and
boundary integration were producing close values for small values  of $\alpha$ and were diverging
for  large $\alpha$.

In practical problems based on experimental data,  boundary conditions are always  obtained with some
errors. Therefore, as the above analysis show, in order to produce an algorithm for the solution of the inverse electrostatics problem
that is numerically implementable, one needs to modify our approach. One possibility would be to find a set of harmonic functions
exhibiting good  behavior from the numerical viewpoint, which could  replace the exponentials in the proof of Theorem \ref{main}.

\section{Necessary and sufficient conditions for uniqueness of solutions}
\label{sec:other}
\subsection{Proofs of uniqueness criteria}
\label{sec:harm}
The goal of this subsection is to prove Theorems \ref{harm} and \ref{harm2}.
Let us start with Theorem \ref{harm}.
To prove necessity, suppose that  $u$ is a solution of problem
\eqref{problem} and let $h \in H(\Omega)$. Then
$$\int_\Omega f \cdot h = \int_\Omega \Delta u \cdot h = \int_\Omega
u \cdot \Delta h =0.$$ Note that the boundary terms in the
integration by parts disappear since $\phi_1=\phi_2=0$.

To prove sufficiency, denote by $G_1(x,y)$ the Green's function of
the Dirichlet boundary value problem in $\Omega$:
$$\Delta_x
G_1(x,y)=\delta(x-y), \,\,\, G_1(x,y)|_{x \in \partial \Omega}=0,$$
and by $G_2(x,y)$ the Green's function for the corresponding Neumann
boundary value problem:
$$\Delta_x
G_2(x,y)=\delta(x-y)-\frac{1}{|\Omega|},\,\,\, (\grad_x G_2(x,y),
\nu)|_{x \in
\partial \Omega}=0.$$ 
Note that the integral over $\Omega$ of the right--hand 
side of the equation above is zero, which is necessary for the existence of a solution of the Neumann problem with zero boundary conditions.

It follows from the definitions of $G_1$ and $G_2$ that
$$(G_1-G_2)(x,y)+ \frac{x_1^2}{2|\Omega|}$$ is a harmonic function of $x$.
Therefore, the assumption $f \in Z$ implies
\begin{equation*}
u(y):=\int_\Omega f(x)G_1(x,y)\,dx = \int_\Omega f(x) G_2(x,y)\, dx -\int_\Omega f(x)\frac{x_1^2}{2|\Omega|}\, dx 
\end{equation*}
for all $y \in \Omega$. Note that the term $$\int_\Omega
f(x)\frac{x_1^2}{2|\Omega|}\,dx$$ is constant and hence of no
importance for the Neumann boundary value problem.  
Let us also remark that  $\int_\Omega f(x) dx =0$ since $f \in Z(\Omega)$.
It is easy to check that, by the properties of $G_1$ and $G_2$, the function $u$
constructed above is a solution of problem \eqref{problem}.  
This completes the proof of Proposition \ref{harm}. \qed

The proof of Theorem \ref{harm2}  is completely analogous. We note
that the existence of Green's functions for the Navier equation with
either Dirichlet or Neumann boundary conditions is well-known~---
see, for instance \cite[section 7.12]{S}.
\subsection{Proof of Theorem \ref{spherical}}
\label{sec:sph} \noindent {\it Proof of (i).} 
Let $A,B,S$ be three
spherical layers centered at the origin such that $A\cup B=S$ and
$A~\cap~B=~\emptyset$. Consider the following linear combination of
characteristic functions of the sets $A$ and $B$:
$$u=\Vol(B)\, \chi_A-\Vol(A)\,\chi_B.$$
By the mean value theorem for harmonic functions we immediately have
$u\in Z(S)$, and this completes the proof of part (i) of the
proposition.

\bigskip

\noindent {\it Proof of (ii)} In order to prove the second part of
the proposition we note that a function $U$ lying in the kernel of
the Navier operator $\LL$ is {\it biharmonic}. Indeed, let $\LL(U) =
\Delta U + \alpha \grad\, \di U =0$. Taking the divergence on both
sides we get $\di (\Delta U)=0$. Here we took into account that $\di
\grad = \Delta$ and that the Laplacian commutes with the divergence.
At the same time, applying the Laplacian to $\LL(U)$ we get
$$\Delta^2\,U + \alpha \Delta\,\grad\,\di U=0.$$ But since $\di
(\Delta U)=0$, the second term vanishes, because $$\Delta \,
\grad\,\di U = \grad \, \di \, \Delta U.$$ Hence, $\Delta^2 U = 0$
and $U$ is biharmonic.

It is well-known that a real valued biharmonic function $f(x)$
satisfies the following mean-value property (see, for example,
\cite{K}):
\begin{equation}
\label{meanvalue} \int_{B(x,r)} f(y) dy = \omega_n r^n f(x) +
\frac{\omega_n \, r^{n+2}}{2(n+2)}\Delta f(x),
\end{equation}
where $B(x,r)$ is a ball of radius $r$ centered at $x \in {\mathbb
R}^n$ and $\omega_n$ is the volume of the unit ball in $\R^n$.

Consider now a  ball $B=B(r_3)$ centered at the origin. Let us
represent it as a union of three sets $B(r_1)\cup S(r_1,r_2) \cup
S(r_2,r_3)$, $r_3 > r_2 > r_1 >0$. Let $F_{a,b}$ be a piecewise
constant vector-valued function taking the values $a$ and $b$ on
$S(r_2,r_3)$ and $S(r_1,r_2$), respectively, and the value one on
$B(r_1)$. Clearly, $F_{a,b} \in V_\sigma$ for all $a,b \in {\mathbb
R}$. Let us show that for any triple $0<r_1 < r_2 < r_3$ there
exists a choice of parameters $a, b$ such that $F_{a,b} \in \Z(B)$.
It can be deduced from the mean-value formula \eqref{meanvalue},
that the inclusion $F_{a,b} \in \Z(B)$ holds if the following system
of equations is satisfied:
$$a(r_3^n-r_2^n) + b (r_2^n-r_1^n) +r_1^n=0,\,\,a(r_3^{n+2}-r_2^{n+2}) + b (r_2^{n+2}-r_1^{n+2})
+r_1^{n+2}=0.
$$
One may check that the determinant of this system does not vanish
for $r_3 > r_2 > r_1 >0$. Indeed, if $r_3 \neq r_2$, the only
positive roots of the determinant considered as a polynomial in
$r_1$ are $r_1=r_2$ and $r_1=r_3$ (there are no other positive roots
because, as can be easily verified, the derivative with respect to
$r_1$ has only one positive root). Hence, there always exists a
unique solution $a,b$ of the system above, and the corresponding
function $F_{a,b} \in \Z(B)$.

This completes the proof of Theorem \ref{spherical}.
\begin{remark}
The proof of part (i) of Theorem \ref{spherical} shows that the
intersection $V_\sigma(S) \cap \Z(S)$  is in fact quite large.
Indeed, it is easy to show that for any partition of $S$ into $k$
concentric spherical layers,  there exists a $(k-1)$--dimensional
linear subspace of functions in $V_\sigma \cap \Z(S)$.

The proof of part (ii) goes through without changes if instead of a
ball $B(r_3)$ one takes a spherical layer
$S=S(r_3,r_0)=S(r_0,r_1)\cup S(r_1,r_2) \cup S(r_2,r_3)$.  It follows
from the proof that for any partition of $S$ into $k$ concentric
spherical layers, there exists a $(k-2)$--dimensional linear space
of functions in $\V_\sigma \cap \Z(S)$.
\end{remark}
\subsection*{Acknowledgments} The authors are grateful to Victor Isakov and Leonid Polterovich for  useful discussions. 
Research of Leonid Parnovski is supported by EPSRC  grant EP/F029721/1.
Research of Iosif Poltero\-vich  is supported by NSERC, FQRNT and Canada Research Chairs program. 

\end{document}